\let\Oldsection\section
\renewcommand{\section}{\FloatBarrier\Oldsection}
\let\Oldsubsection\subsection
\renewcommand{\subsection}{\FloatBarrier\Oldsubsection}
\let\Oldsubsubsection\subsubsection
\renewcommand{\subsubsection}{\FloatBarrier\Oldsubsubsection}
\newtheorem{theorem}{Theorem}[section]
\newtheorem{lemma}[theorem]{Lemma}
\newcommand{\E}{E}
\newtheorem{proposition}[theorem]{Proposition}
\newtheorem{definition}[theorem]{Definition}
\newtheorem{example}[theorem]{Example}
\newtheorem{assumption}[theorem]{Assumption}
\tikzstyle{input} = [circle, minimum width=1cm, text centered, draw=black, fill=green!20]
\tikzstyle{output} = [circle, minimum width=1cm, text centered, draw=black, fill=blue!20]
\tikzstyle{lstm} = [rectangle, rounded corners, minimum width=2cm, minimum height=1cm,text centered, draw=black, fill=red!20]
 \tikzstyle{lin} = [rectangle, minimum width=2cm, minimum height=1cm,text centered, draw=black, fill=orange!20]
 \tikzstyle{act} = [ellipse, minimum width=2cm, minimum height=1cm,text centered, draw=black, fill=yellow!20]
\tikzstyle{dot} = [rectangle, minimum width=2cm, minimum height=1cm,text centered]
\tikzstyle{arrow} = [thick,->,>=stealth]
\tikzstyle{map} = [thick, dashed,->,>=stealth]
\numberwithin{equation}{section}
\def\RB{\mathbb{R}}
\def\FC{\mathcal{F}}
\definecolor{LightCyan}{rgb}{0.88,1,1}
\def\1B{\text{1\!\!I}}
\def\RB{\mathbb{R}}
\def\FC{\mathcal{F}}
\definecolor{LightCyan}{rgb}{0.88,1,1}
\def\1B{\text{1\!\!I}}
\def\<{\langle}
\def\>{\rangle}
\def\E{\mathbb{E}}
\def\bX{\mathbf{X}}
\def\bS{\mathbf{S}}
\def\bbS{\mathbb{S}}
\def\dsqcup{\sqcup\mathchoice{\mkern-7mu}{\mkern-7mu}{\mkern-3.2mu}{\mkern-3.8mu}\sqcup}
\DeclareMathOperator*{\esssup}{ess\,sup}
\begin{document}


\title{SIG-BSDE for Dynamic Risk Measures}
\author{Nacira Agram $^{1}$, Jan Rems $^{2}$ and Emanuela Rosazza Gianin $^{3}$}
\date{\today}
\maketitle

\footnotetext[1]{Department of Mathematics, KTH Royal Institute of Technology 100 44, Stockholm, Sweden. \newline
Email: nacira@kth.se. Work supported by the Swedish Research Council grant (2020-04697) and the
Slovenian Research and Innovation Agency, research core funding No.P1-0448.}

\footnotetext[2]{Department of Mathematics, University of Ljubljana, Ljubljana, Slovenia. \newline Email: jan.rems@fmf.uni-lj.si. Work supported by Slovenian Research and Innovation Agency, research core funding No.P1-0448.}

\footnotetext[3]{Department of Statistics and Quantitative Methods, University of Milano-Bicocca, Milano, Italy. Email:  emanuela.rosazza1@unimib.it. Member of GNAMPA, INdAM, Italy. Work supported by Gnampa Research Project 2024 (PRR-20231026-073916-203).}


\begin{abstract}
In this paper, we consider dynamic risk measures induced by backward stochastic differential equations (BSDEs). We discuss different examples that come up in the literature, including the entropic risk measure and the risk measure arising from the ambiguous interest rate problem. We develop a numerical algorithm for solving a BSDE using the backward Euler-Maruyama scheme and the universal approximation theorem for the signature of a path. We prove the convergence theorem and use the algorithm to solve some examples of dynamic risk measures induced by BSDEs. At last a deep learning approach is included for solving the ambiguous interest rate problem as well. 
\end{abstract}

\textbf{Keywords :}  
Dynamic risk measure; BSDE; Signatures; Deep learning.

\section{Introduction}
BSDEs appear in their linear form as adjoint equations when dealing with stochastic control. Bismut \cite{b} was the first to study this type of linear BSDE. Subsequently, this theory has also been developed for the nonlinear case, notably in the seminal work by Pardoux and Peng \cite{PP}. Additional developments in BSDE theory and the first applications to finance can be found in the works of Pardoux \cite{P}, El Karoui, Peng, and Quenez \cite{EPQ}, while to risk measures in Rosazza Gianin \cite{RG} and Barrieu and El Karoui \cite{entropic-bsde}.

Given a driving Brownian motion $B$, a generator $f:\Omega\times\lbrack0,T]\times\mathbb{R}\times\mathbb{R}\rightarrow
\mathbb{R}$ and a terminal condition $\xi$ solving a BSDE consists of finding a process $(Y,Z)$
adapted to the considered filtration (the Brownian one), such that, at time $t$, $(Y,Z)$ satisfies the equation:
\begin{align*} 
dY_{t}=-f(t,Y_{t},Z_{t})dt+Z_{t}dB_{t};\quad
Y_{T}=X\in L^{2}(\mathcal{F}_{T}).
\end{align*}

It is often difficult to obtain an explicit solution for BSDEs. However, one can hope to obtain a numerical approximation. The backward Euler-Maruyama scheme for BSDEs was proposed in \cite{bem}. In \cite{bem-conv} and \cite{bem-conv2}, convergence results for the scheme were introduced. In \cite{bem-reg}, the authors rely on regression on function bases to estimate conditional expectations in the above mentioned schemes and provide convergence results. However, the choice of a specific function basis is quite arbitrary. In this paper, we propose a universal regression approach using signatures of paths from rough path theory. The main idea is to encode information about a path in a sequence of iterated integrals called a signature that enjoys nice algebraic properties. These properties allow us to estimate the conditional expectation as a linear function of the signature. For a more detailed view of signatures and rough paths, we suggest monographies \cite{sig-book} and \cite{friz-book}. 

Our method will be then applied to some dynamic risk measures induced by BSDEs. It is well known in the literatrure, indeed, the strong relationship between (time-consistent) dynamic risk measures and BSDEs. See, e.g., \cite{RG}, \cite{entropic-bsde} and \cite{delbaen-et-al}. Special attention will be devoted to examples of entropic risk measures and of risk measures dealing with ambiguity on interest rates.
Some recent papers address the problem of risk measures with deep/reinforcement learning (see, e.g., \cite{rm-rl} and \cite{fouque}). However, the methodology driven in these works is different from ours. Feng et al. \cite{fouque} consider a static setting for systemic risk measures, while Coache and Jaimungal \cite{rm-rl} deal with dynamic risk measures but not BSDE-induced.

In the recent years, much progress has been made in using deep learning approaches to find numerical solutions to BSDEs. In \cite{han}, the authors proposed a deep learning forward approach. The initial value $Y_0$ is initialized as a parameter of the model. Then, at each discrete time-point, the value of process $Z$ is estimated by a neural network and the whole trajectory of $Y$ is obtained through a forward Euler-Maruyama scheme. Parameters of the model are obtained in a way that minimizes the distance between the obtained and prescribed terminal value of solution $Y$. 

A different deep learning approach, backwards in time, was proposed in \cite{pham}. There, authors use the approach inspired by dynamic programming, where the terminal value of $Y$ is initialized, and then both $Y$ and $Z$ are approximated by a neural network at the previous time-step. When the optimal approximation at the penultimate time-step is obtained, following the minimization of error present in (one step) Euler-Maruyama scheme, the outcome is used as a target value for obtaining $Y$ and $Z$ at one time before the penultimate time. This is then iterated backwards until the algorithm reaches time $0$. This makes the approach local in time, which makes it suitable for finding numerical solutions for problems with larger terminal times. By relying on neural networks, both of these approaches are suitable for solving high-dimensional problems. However, the training times are relatively long, and memory requirements when a fine time partition is used are impractical. Finally, let us refer to \cite{rm-rl}, where authors use reinforcement learning for solving dynamic risk measures through their dual representation. 
\medskip

The paper is organized as follows. In Section \ref{sec: prel}, we recall notions, definitions, and properties of BSDEs and dynamic risk measures, and we also provide some examples on which we will focus later on. In Section \ref{sec: signature}, we present our method based on signatures. Specifically, we introduce a new algorithm for numerical solutions of BSDE, called SIG-BSDE, and prove some convergence results. In Section 4, we consider the case of ambiguity in interest rates and solve the problem using deep learning. In Section 5, we  discuss the performance of SIG-BSDE with respect to the deep BSDE algorithm in some particular cases such as the linear BSDE, the entropic risk measure, and the risk measure accounting for ambiguity on interest rates.

\section{Dynamic risk measures induced by BSDEs}\label{sec: prel}
Dynamic risk measures indeed extend the concept of static risk measures to a dynamic setting, allowing for risk assessments to change over time based on new information. BSDEs can be effectively utilized to define these dynamic risk measures.

Given a BSDE with a terminal condition  (often $\xi =-X$ where $X$ is a future payoff or loss), the solution $Y_t$ of the BSDE can be interpreted as the dynamic risk measure at time $t$ for the future payoff $X$. Formally, we write: $\rho_t (X) =Y_t,$ where the dynamic risk measures, $(\rho_t)_{t\in [0,T]},$ adapt to the information available up to time $t$. 
\subsection{BSDEs}
Let us recall the main result of Pardoux and
Peng's work of 1990 in \cite{PP}. For this, we shall introduce the
corresponding spaces in which the solution processes live. For the whole Section we will refer to \cite{PP}.

We consider a complete probability space $\left(  \Omega,\mathcal{F},P\right)
$ equipped with a one-\newline dimensional standard Brownian motion $B$. We
denote by $\mathbb{F}=\left(  \mathcal{F}_{t}\right)  _{t\geq0}$ the right-continuous complete filtration generated by $B.$ 

Let $S_{\mathbb{F}}^{2}$ denote the space of all real-valued continuous $\mathbb{F}%
$-adapted processes $Y=(Y_{t})_{t\in\lbrack0,T]}$ associated with the norm
$$\mathbb{E}\left[  \sup_{t\in\lbrack0,T]}|Y_{t}|^{2}\right]  <+\infty.$$ By
$H_{\mathbb{F}}^{2}$ we denote the space of all $\mathbb{F}$-adapted processes
$Z=(Z_{t})_{t\in\lbrack0,T]}$ which are square integrable over the space
$\Omega\times\lbrack0,T]$ equipped with the norm $$\mathbb{E}\left[  \int_{0}^{T}|Z_{t}
|^{2}dt\right]  <+\infty.$$
A solution of a BSDE with terminal time $T$, terminal condition $X$ and driver $f$ consists of a pair of processes $(Y,Z)$. The BSDE is typically written as:
\begin{equation}\label{eq1} 
dY_{t}=-f(t,Y_{t},Z_{t})dt+Z_{t}dB_{t};\quad
Y_{T}=X.
\end{equation}
or, in its integral form of equation \eqref{eq1}, 
\begin{align*} 
Y_t=X+\int_t^T f(t,Y_s,Z_s)ds-\int_t^T Z_sdB_{s}.
\end{align*}
Hence, taking conditional expectation, we get 
$$
Y_t=\mathbb{E} \left[ X +\int_t^T  f(t,Y_s,Z_s)ds \mid \FC_t \right].
$$
We recall Pardoux and Peng's existence and uniqueness result.
\begin{theorem}[Existence and Uniqueness] Let $X\in L^{2}(\mathcal{F}_{T}) $
and $f:\Omega\times\lbrack0,T]\times\mathbb{R}\times\mathbb{R}\rightarrow
\mathbb{R}$ a jointly measurable mapping satisfying the following assumptions:
\begin{itemize}
    \item $f(\cdot,\cdot,0,0)\in H_{\mathbb{F}}^{2},$
    \item $f(t,\omega,\cdot,\cdot)$ is uniformly Lipschitz, $dt\otimes dP$-a.e., i.e., there is some constant $L\in \mathbb{R}$ such that, $dP\otimes dt$-a.e., for all $y,y^{\prime},z,z^{\prime}\in\mathbb{R},$
    $$|f(t,\omega,y,z)-f(t,\omega,y',z')|\le L(|y-y'|+|z-z'|).$$
\end{itemize}
Then the BSDE \eqref{eq1} possesses a unique solution $(Y,Z)\in S_{\mathbb{F}}^{2}\times H_{\mathbb{F}}^{2}.$
\end{theorem}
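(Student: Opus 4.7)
The plan is to prove the theorem by a Picard-type fixed point argument on a suitable Banach space of pairs of processes, following the classical route of Pardoux and Peng. First I would introduce the space $\mathcal{B}^2 := S_\mathbb{F}^2 \times H_\mathbb{F}^2$ endowed with the weighted norm
\begin{equation*}
\|(Y,Z)\|_\beta^2 := \mathbb{E}\left[\int_0^T e^{\beta t}\bigl(|Y_t|^2 + |Z_t|^2\bigr)\,dt\right],
\end{equation*}
for some $\beta > 0$ to be chosen later. This norm is equivalent to the natural one on $H_\mathbb{F}^2 \times H_\mathbb{F}^2$, but the exponential weight is what will allow us to absorb the Lipschitz constant $L$ of $f$ at the end.

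Next, for any fixed $(y,z) \in H_\mathbb{F}^2 \times H_\mathbb{F}^2$, I would define the Picard map $\Phi(y,z) := (Y,Z)$ by solving the \emph{linear} BSDE with driver $\tilde f(t) := f(t,y_t,z_t)$, which no longer depends on the unknown. Existence of $(Y,Z)$ at this stage rests on Itô's martingale representation theorem applied to the square-integrable martingale
\begin{equation*}
M_t := \mathbb{E}\left[X + \int_0^T f(s,y_s,z_s)\,ds \,\Big|\, \mathcal{F}_t\right],
\end{equation*}
which yields $Z \in H_\mathbb{F}^2$ with $M_t = M_0 + \int_0^t Z_s\,dB_s$; one then sets $Y_t := M_t - \int_0^t f(s,y_s,z_s)\,ds$ and checks $Y \in S_\mathbb{F}^2$ using Doob's $L^2$ inequality, together with $X \in L^2(\mathcal{F}_T)$, $f(\cdot,\cdot,0,0) \in H_\mathbb{F}^2$ and the Lipschitz bound to control $\mathbb{E}\int_0^T |f(s,y_s,z_s)|^2\,ds$.

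The heart of the argument is to show that $\Phi$ is a strict contraction on $(\mathcal{B}^2, \|\cdot\|_\beta)$ for $\beta$ sufficiently large. Given two inputs $(y,z), (y',z')$ with images $(Y,Z), (Y',Z')$, I would apply Itô's formula to $e^{\beta t}|Y_t - Y'_t|^2$, take expectations, and exploit the Lipschitz hypothesis together with Young's inequality $2ab \leq \tfrac{1}{\alpha}a^2 + \alpha b^2$ to arrive at an estimate of the form
\begin{equation*}
\|\Phi(y,z) - \Phi(y',z')\|_\beta^2 \leq \frac{C(L)}{\beta}\,\|(y,z) - (y',z')\|_\beta^2,
\end{equation*}
for a constant $C(L)$ depending only on $L$. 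Choosing $\beta$ large enough makes $\Phi$ a contraction, so Banach's fixed point theorem produces a unique fixed point $(Y,Z)$ in $\mathcal{B}^2$, which is precisely the solution of \eqref{eq1}.

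The step I expect to be most delicate is the Itô-formula estimate underlying the contraction: one must track carefully the cross terms between the $|Y-Y'|^2\,dt$ integral, the $|Z-Z'|^2\,dt$ term coming from the quadratic variation, and the driver difference $f(t,y_t,z_t) - f(t,y'_t,z'_t)$, while simultaneously ensuring that the local-martingale part $\int_0^\cdot e^{\beta s}(Y_s - Y'_s)(Z_s - Z'_s)\,dB_s$ is a true martingale so that taking expectations is justified (a standard localization plus Burkholder-Davis-Gundy argument). Once the contraction constant is explicit, uniqueness in $S_\mathbb{F}^2 \times H_\mathbb{F}^2$ follows either from the fixed-point theorem itself or by running the same Itô estimate on two candidate solutions.
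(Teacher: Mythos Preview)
Your proposal is a correct outline of the classical Pardoux--Peng contraction argument. Note, however, that the paper does not actually give a proof of this theorem: it is stated without proof as a recalled result, with a reference to the original work \cite{PP}. So there is no ``paper's own proof'' to compare against; your sketch is precisely the standard route one would find in \cite{PP} or in textbook treatments such as \cite{bsde-book}.

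One small point worth tightening: you define $\mathcal{B}^2 = S_\mathbb{F}^2 \times H_\mathbb{F}^2$ but then equip it with the $\|\cdot\|_\beta$ norm, which is really an $H_\mathbb{F}^2 \times H_\mathbb{F}^2$ norm and is not equivalent to the $S_\mathbb{F}^2 \times H_\mathbb{F}^2$ norm (the sup norm on $Y$ is strictly stronger). The cleanest fix is to run the contraction on $H_\mathbb{F}^2 \times H_\mathbb{F}^2$ with $\|\cdot\|_\beta$, obtain the unique fixed point there, and then upgrade $Y$ to $S_\mathbb{F}^2$ a posteriori via Doob's maximal inequality and the BDG inequality applied to the stochastic integral in the equation. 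You already have all the ingredients for this in your write-up; just reorder them so the space on which Banach's theorem is invoked is genuinely complete under the norm you use.
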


We recall that the solution of a linear BSDE can be written as a conditional expectation via an exponential martingale (closed form).\\
Let $\alpha_t$ and $\beta_t$ be an $\mathbb{R}$-valued, $\mathbb{F}$-adapted processes. For each $t \in [0,T]$, $(\Gamma_{t,s})_{s\in [t,T]}$ be the unique solution of the following forward SDE
  \begin{equation}\label{eq2.20}
 \displaystyle  d\Gamma_{t,s} = \Gamma_{t,s}[ \alpha_s ds + \beta_s dB_s 
];\quad
 \Gamma_{t,t} = 1.
 \end{equation}

 \begin{theorem}[Closed formula]\label{thm:linear}
 Let $\alpha$ and $\beta$ be bounded adapted processes, $X \in L^{2}(\mathcal{F}_{T})$ and $\varphi$ adapted with $\displaystyle \mathbb{E} \left[ \int_0^T \varphi^2_t dt\right] < \infty$. Let $\Gamma$ be the so-called adjoint process defined as the solution of the SDE \eqref{eq2.20}. 
 Then the unique solution $(Y,Z)$ of the linear BSDE
 \begin{align}
 \label{eq2.18}
 dY_t  \displaystyle = - \left[ \varphi_t + \alpha_t Y_t + \beta_t Z_t
 \right]dt + Z_t dB_t 
  ;\quad
  Y_T =X,
  \end{align}
is given by
  \begin{equation}\label{eq2.19}
  Y_t = \mathbb{E} \left[ \Gamma_{t,T} X + \int_t^T \Gamma_{t,s} \varphi_s ds \mid \FC_t \right],\quad  0 \leq t \leq T, \quad \text{a.s.}
  \end{equation}
  \end{theorem}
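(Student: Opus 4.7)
The plan is to verify the explicit formula \eqref{eq2.19} by an Itô-calculus argument on the product $\Gamma_{t,s} Y_s$. First I note that existence and uniqueness of $(Y,Z)$ are not at stake: the driver $(y,z) \mapsto \varphi_s + \alpha_s y + \beta_s z$ is Lipschitz because $\alpha,\beta$ are bounded, and the inhomogeneous term $\varphi$ lies in $H_{\mathbb{F}}^2$, so the previous existence/uniqueness theorem applies. It then suffices to check that the process defined by the right-hand side of \eqref{eq2.19} coincides with $Y_t$.

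Next I fix $t \in [0,T]$ and apply the Itô product rule to $s \mapsto \Gamma_{t,s} Y_s$ on $[t,T]$. Using \eqref{eq2.20} and \eqref{eq2.18}, the drift contributions combine as
\begin{align*}
\Gamma_{t,s}\alpha_s Y_s \,ds \;-\; \Gamma_{t,s}\bigl(\varphi_s + \alpha_s Y_s + \beta_s Z_s\bigr)\,ds \;+\; \Gamma_{t,s}\beta_s Z_s\,ds \;=\; -\Gamma_{t,s}\varphi_s\,ds,
\end{align*}
where the last term on the left comes from the quadratic covariation $d\langle \Gamma_{t,\cdot}, Y\rangle_s$. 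The martingale part reads $\Gamma_{t,s}(\beta_s Y_s + Z_s)\,dB_s$. Integrating from $t$ to $T$ and using $\Gamma_{t,t}=1$, $Y_T = X$ gives
\begin{align*}
Y_t \;=\; \Gamma_{t,T} X + \int_t^T \Gamma_{t,s}\varphi_s\,ds \;-\; \int_t^T \Gamma_{t,s}\bigl(\beta_s Y_s + Z_s\bigr)\,dB_s.
\end{align*}
The formula \eqref{eq2.19} will then follow by taking $\mathcal{F}_t$-conditional expectations, provided the stochastic integral is a true martingale.

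The main technical step, and the one I expect to require some care, is exactly this martingale property: one needs
\begin{align*}
\mathbb{E}\!\left[\int_0^T \Gamma_{t,s}^2\bigl(\beta_s^2 Y_s^2 + Z_s^2\bigr)\,ds\right] < \infty.
\end{align*}
This follows from boundedness of $\beta$, the fact that the linear SDE \eqref{eq2.20} with bounded coefficients admits moments of every order for $\sup_{s\in[t,T]}\Gamma_{t,s}$, and from $(Y,Z)\in S_{\mathbb{F}}^2\times H_{\mathbb{F}}^2$; a Cauchy–Schwarz estimate on $\mathbb{E}[\sup_s \Gamma_{t,s}^4]\cdot \mathbb{E}[\sup_s Y_s^4]$-type terms together with $\int_0^T Z_s^2\,ds \in L^1$ closes the argument. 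Once the stochastic integral has zero conditional expectation, taking $\mathbb{E}[\cdot\mid \mathcal{F}_t]$ in the displayed identity yields \eqref{eq2.19}, completing the proof.
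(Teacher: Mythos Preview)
The paper does not actually prove this theorem; it is recalled as a standard fact (implicitly from \cite{EPQ}) and stated without proof, so there is nothing to compare against directly. Your overall strategy---apply It\^o's product rule to $s\mapsto\Gamma_{t,s}Y_s$ and take conditional expectations---is the standard one and is correct in outline.

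There is, however, a genuine gap in the integrability step. You claim the stochastic integral $\int_t^T \Gamma_{t,s}(\beta_s Y_s+Z_s)\,dB_s$ is a true martingale via ``a Cauchy--Schwarz estimate on $\mathbb{E}[\sup_s \Gamma_{t,s}^4]\cdot \mathbb{E}[\sup_s Y_s^4]$-type terms''. But under the stated hypotheses the solution $(Y,Z)$ lies only in $S_{\mathbb{F}}^2\times H_{\mathbb{F}}^2$: you have $\mathbb{E}[\sup_s Y_s^2]<\infty$ and $\mathbb{E}[\int_0^T Z_s^2\,ds]<\infty$, not fourth moments. So the bound $\mathbb{E}[\int_t^T \Gamma_{t,s}^2(\beta_s^2 Y_s^2+Z_s^2)\,ds]<\infty$ cannot be obtained this way, and in general the stochastic integral is only a local martingale.

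The standard fix is localization. Introduce stopping times $\tau_n\uparrow T$ that make the stochastic integral a true martingale on $[t,\tau_n]$; this yields
\[
Y_t=\mathbb{E}\!\left[\Gamma_{t,\tau_n}Y_{\tau_n}+\int_t^{\tau_n}\Gamma_{t,s}\varphi_s\,ds\,\middle|\,\mathcal{F}_t\right].
\]
Now pass to the limit: the dominating variable $\sup_{s\in[t,T]}|\Gamma_{t,s}Y_s|$ is in $L^1$ because $\sup_s|\Gamma_{t,s}|\in L^2$ (all moments, by boundedness of $\alpha,\beta$) and $\sup_s|Y_s|\in L^2$, so Cauchy--Schwarz gives an $L^1$ bound, which is all dominated convergence needs. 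The $\varphi$ term is handled analogously. This closes the argument without ever requiring fourth moments of $Y$ or $Z$.
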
 
\begin{example}
\begin{itemize}
    \item If $\alpha_t= \beta_t=\varphi_t=0$, then the component of the solution Y given in equation \eqref{eq2.19} satisfies
    \begin{equation*}
  Y_t = \mathbb{E}\left[ X \mid \FC_t \right] \; , \; 0 \leq t \leq T, \quad \text{a.s.}
  \end{equation*}

    \item   If $\alpha_t=\varphi_t=0$, then the equations \eqref{eq2.18} takes the form
    
 \begin{align*}
 dY_t  \displaystyle = -  \beta_t Z_tdt + Z_t dB_t 
  ;\quad
  Y_T =X.
  \end{align*} 
To this end, we define the probability measure $Q$ by
$$
dQ=M_tdP \text{ on } \mathcal{F}_T,
$$
with
$$
M_t=\exp \Big(\int_0^t \beta_sdB_s-\frac{1}{2}\int_0^t \beta^2_sds\Big).
$$
Then 
\begin{align*}
 dY_t  \displaystyle = Z_t dB^Q_t 
  ,\quad
  Y_T =X,
  \end{align*} 
where
$$
B^Q_t=B_t-\int_0^t \beta_s ds.
$$

Therefore, the component of the solution Y given in equation \eqref{eq2.19} satisfies
\begin{equation*}
Y_t = \mathbb{E}_Q\left[ X \mid \FC_t \right] \; , \; 0 \leq t \leq T, \quad \text{a.s.}
\end{equation*}
\end{itemize}
\end{example}
A relevant BSDE for applications to Mathematical Finance and to dynamic risk measures is the one with a quadratic driver, which will be discussed in Section \ref{sec:entropic}. This is known as the entropic risk measure.

We shall recall one of the most important properties of BSDEs, which is the comparison theorem. It plays a crucial role in the study of optimization problems expressed in terms of BSDEs and corresponding dynamic measures of risk.
\begin{theorem}[Comparison Theorem]
  Let $X^1, X^2\in L^{2}(\mathcal{F}_{T})$, let $f^1$ and $f^2$ be two drivers of BSDEs. For $i=1,2$, let $(Y^i,Z^i)$ be a solution in $S_{\mathbb{F}}^{2}\times H_{\mathbb{F}
}^{2}$ of the BSDE
$$
-dY_{t}^i=f^i(t,Y^i_{t},Z^i_{t})dt-Z^i_{t}dB_{t}; \quad Y^i_{T}=X_i.
$$
Assume that 
\begin{align*}
X^1 \geq X^2 \quad  a.s. \text{ and } f^1(t,Y^2_t,Z^2_t)\geq f^2(t,Y^2_t,Z^2_t),\quad t\in[0,T],\quad dP\otimes dt\quad a.s.
\end{align*}
Then $ Y^1_t \geq Y^2_t$ a.s. for all $t\in [0,T]$.
\end{theorem}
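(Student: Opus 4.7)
My plan is to reduce the comparison to a \emph{linear} BSDE for the difference $(\delta Y, \delta Z) := (Y^1 - Y^2, Z^1 - Z^2)$ and then read off the sign of $\delta Y_t$ from the closed-form representation in Theorem \ref{thm:linear}. First I would subtract the two BSDEs to obtain
\[-d(\delta Y_t) = \bigl[f^1(t, Y^1_t, Z^1_t) - f^2(t, Y^2_t, Z^2_t)\bigr]\,dt - \delta Z_t\, dB_t, \qquad \delta Y_T = X^1 - X^2.\]

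The key step is to linearise the driver by a telescoping trick: inserting and subtracting $f^1(t, Y^2_t, Z^1_t)$ and $f^1(t, Y^2_t, Z^2_t)$, I would define
\[\alpha_t := \frac{f^1(t, Y^1_t, Z^1_t) - f^1(t, Y^2_t, Z^1_t)}{Y^1_t - Y^2_t}\,\mathbf{1}_{\{Y^1_t \neq Y^2_t\}}, \quad \beta_t := \frac{f^1(t, Y^2_t, Z^1_t) - f^1(t, Y^2_t, Z^2_t)}{Z^1_t - Z^2_t}\,\mathbf{1}_{\{Z^1_t \neq Z^2_t\}},\]
\[\varphi_t := f^1(t, Y^2_t, Z^2_t) - f^2(t, Y^2_t, Z^2_t).\]
By the uniform Lipschitz assumption on $f^1$ the processes $\alpha$ and $\beta$ are $\mathbb{F}$-adapted and bounded by $L$, while the hypothesis on the drivers gives $\varphi_t \geq 0$ a.s. Substituting back, $(\delta Y, \delta Z)$ solves the linear BSDE
\[-d(\delta Y_t) = [\varphi_t + \alpha_t \delta Y_t + \beta_t \delta Z_t]\,dt - \delta Z_t\, dB_t, \qquad \delta Y_T = X^1 - X^2 \geq 0.\]

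Now Theorem \ref{thm:linear} applies verbatim and yields
\[\delta Y_t = \mathbb{E}\!\left[\Gamma_{t,T}(X^1 - X^2) + \int_t^T \Gamma_{t,s}\varphi_s\,ds \;\Big|\; \FC_t\right],\]
where $\Gamma_{t,s}$ solves \eqref{eq2.20} driven by $\alpha$ and $\beta$. Being a stochastic exponential, $\Gamma_{t,s} > 0$ a.s., and combined with $X^1 - X^2 \geq 0$ and $\varphi_s \geq 0$ this forces $\delta Y_t \geq 0$ a.s., which is the desired conclusion.

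The only delicate point I anticipate is checking that $\alpha$ and $\beta$ are genuinely $\mathbb{F}$-adapted and uniformly bounded rather than merely pointwise ratios; measurability follows from the joint measurability of $f^1$ and the indicator convention, while the Lipschitz bound $L$ controls the absolute value of each ratio independently of which branch of the indicator is active. Verifying $\varphi \in H^{2}_{\mathbb{F}}$ so that Theorem \ref{thm:linear} is applicable is a routine consequence of $f^i(\cdot,\cdot,0,0) \in H^{2}_{\mathbb{F}}$, the Lipschitz bound, and $(Y^2, Z^2) \in S^{2}_{\mathbb{F}} \times H^{2}_{\mathbb{F}}$.
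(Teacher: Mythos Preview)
The paper does not actually prove the Comparison Theorem; it is merely \emph{stated} as a classical result recalled from the literature (the section begins ``We shall recall one of the most important properties of BSDEs''), with implicit reference to \cite{PP} and \cite{EPQ}. So there is no in-paper proof to compare against.

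That said, your argument is correct and is precisely the standard proof one finds in \cite{EPQ}: linearise the difference BSDE via bounded adapted coefficients $\alpha,\beta$ obtained from the Lipschitz quotient of $f^1$, then invoke the closed-form representation of Theorem~\ref{thm:linear} and the strict positivity of the stochastic exponential $\Gamma_{t,s}$. Your identification of the only subtle points (measurability and boundedness of the ratio processes, and the integrability of $\varphi$) is accurate, and each is handled exactly as you indicate. Nothing is missing.
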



\subsection{Dynamic risk measures}

Risk measures have been introduced in the literature as a tool to quantify the riskiness of financial positions, both in a static and in a dynamic framework. To be more concrete, let $T$ be a fixed finite time horizon and let $L^p\left(  \mathcal{F}_{T}\right)$, with $p \in [2,\infty]$, be the space of $\mathcal{F}_T$-random variables that are $p$-integrable. A \textit{static risk measure} is a map $\rho:L^{p}\left(  \mathcal{F}_{T}\right)
\rightarrow \mathbb{R}$ that quantifies now the riskiness of any $X \in L^{p}\left(\mathcal{F}_T \right)$ that can be seen as the profit and loss (or the return) of a financial position. A \textit{dynamic risk measure}, instead, measures the riskiness of ($\mathcal{F}_T$-measurable) positions at any time $t \in [0,T]$.

Here below we only recall the essential definitions and arguments on static and dynamic risk measures. We refer, among many others, to \cite{FS02}, \cite{FS}, \cite{FRG}, \cite{delb}, \cite{RG}, \cite{entropic-bsde} and to the references therein for a detailed treatment. 

\begin{definition}



A \emph{dynamic convex risk measure} is a family $(\rho_t)_{t\in [0,T]}$ such that
$\rho_t:L^{p}\left(  \mathcal{F}_{T}\right)
\rightarrow L^{p}\left(  \mathcal{F}_{t}\right)$ and satisfying, for any $t \in [0,T]$,
\begin{itemize}
\item (Convexity) $\rho_t(\lambda X+(1-\lambda)Z)\leq
\lambda\rho_t(X)+(1-\lambda)\rho_t(Z)$ for all $\lambda
\in[0,1]$ and all $X,Z \in L^{p}\left(
\mathcal{F}_{T}\right)  $.

\item (Monotonicity) If $X \leq Z,$ then $\rho_t(X)\geq\rho_t(Z)$.

\item (Translation invariance) $\rho_t(X+a_t)=\rho_t(X)-a_t$ for all
$X\in L^{p}\left(  \mathcal{F}_{T}\right)  $ and all  $a_t \in L^{p}\left(  \mathcal{F}_{t}\right)  $.

\item $\rho_t(0)=0$.
\end{itemize}
A \emph{static convex risk measure} is a static risk measure satisfying the static versions of the properties above. 
\end{definition}

The last axiom is known as a normalisation and usually it is assumed for convenience. Note that convexity is an important criterion for risk measures because it means that diversification reduces the risk. Working with non-convex risk measures can lead to counter-intuitive results. Nevertheless, the classical risk measure Value at Risk (VaR) is not convex, but still frequently used in practice.\vspace{0.5cm}

Dynamic risk measures can be constructed from BSDEs as follows: Let $f(t,z)$ be a convex function of $z$ ($f$ does not depend on $y$) with $f(t,0)=0$ for any $t \in [0,T]$. For given $X \in L^{2}(\mathcal{F}_{T})$, let $(Y_t , Z_t)$ be the solution of the BSDE
\begin{equation}\label{eq2.11}
d Y_t = - f(t, Z_t)dt+Z_tdB_t; \quad
Y_T= -X.
\end{equation}
Then
\begin{equation}\label{eq2.12}
\rho_t(X) := Y_t
\end{equation}
defines a dynamic convex risk measure. Furthermore, any dynamic risk measure induced by a BSDE as before naturally satisfies the following time-consistency property:
\begin{equation}
\rho_t(X)=\rho_t(-\rho_u(X)), \quad \text{for any } X \in L^{2}(\mathcal{F}_{T}),\quad t\leq u \leq T. 
\end{equation}
For a discussion on different formulation of time-consistency for risk measures and on the normalization assumption, we refer to \cite{di-nunno-rg} and to the references therein.
\subsubsection{Entropic risk measures} \label{sec:entropic}
A relevant example of a dynamic risk measure induced by a quadratic BSDE is the entropic risk measure.
It is a type of risk measure which depends on the risk aversion of the user through the exponential utility function. Let $\theta > 0$ and let $X \in L^p(\FC_T)$ for $p \geq 2.$ The entropic risk measure is defined as 
\begin{align*}
    \rho^{entr}(X) = \frac{1}{\theta} \log \left( \E \left[e^{-\theta X} \right] \right).
\end{align*}
The name entropic becomes more clear when one considers a dual representation where we have 
\begin{align*}
    \rho^{entr}(X) = \sup_{Q \in \mathcal{M}} \left\{ \E_Q\left[ -X \right] - \frac{1}{\theta} H(Q|P) \right\},
\end{align*}
where $H(Q|P) = \E \left[ \frac{dQ}{dP} \log \frac{dQ}{dP} \right]$ is the relative entropy of $Q \ll P$.
See \cite{FRG} and \cite{entropic-bsde} for a deeper discussion on this type of risk measure.

The dynamic version of the measure is obtained in a standard way as 
\begin{align} \label{eq: entropic-explicit}
    \rho^{entr}_t(X) = \frac{1}{\theta} \log \left( \E \left[e^{-\theta X} \mid \FC_t \right] \right).
\end{align}
In \cite{entropic-bsde} authors show that $\rho^{ent}_t(X) = Y_t$, where $Y$ is the first component of the solution of the quadratic BSDE
\[
dY_{t}=-\frac{\theta}{2}Z^2_t dt+ Z_{t}dB_{t};\quad 
Y_{T}=-X.
\]

By Proposition 4 in Detlefsen and Scandolo \cite{detlef-scandolo}, the entropic risk measure solving the previous equation has the following dual representation
\begin{equation} \label{eq: dual-entropic}
\rho_t^{entr}(X)= \esssup_{Q \ll P: \, Q=P \mbox{ on } \mathcal{F}_t} \{\E_Q[-X | \mathcal{F}_t] - c^{entr}_{t}(Q)\},
\end{equation}
with (minimal) penalty term
\begin{equation}\label{eq: penalty-entropic}
c^{entr}_{t}(Q)= \frac{1}{\theta} \E\left[ \left. \frac{dQ}{dP} \ln \left(\frac{dQ}{dP} \right) \right| \mathcal{F}_t\right].
\end{equation}

\begin{example}\label{ex:entrop}
    Let us consider $X = B_T$ in the preceding definitions. We get 

\begin{align*}
    \rho^{entr}_t(B_T) &= \frac{1}{\theta} \log \left( \E \left[e^{-\theta B_T} \mid \FC_t \right] \right) \\
    &= \frac{1}{\theta} \log \left( \E \left[e^{-\theta (B_T -B_t + B_t)} \mid \FC_t \right] \right)  \\
    &= \frac{1}{\theta} \log \left( e^{-\theta B_t}e^{\frac{\theta^2(T-t)} {2}}\right) \\
    &= -B_t + \frac{\theta(T-t)}{2}.
\end{align*}
This presents a benchmark solution to which we can compare the results of two numerical approaches later.
\end{example}

\subsubsection{Other particular cases of dynamic risk measures induced by BSDEs}

So far, we have focused on translation invariant (or cash-additive) risk measures. In the recent literature, cash-additivity has been discussed and sometimes replaced by cash-subadditivity:
\begin{equation} \label{eq: csa}
\rho_t(X+a_t) \geq \rho_t(X)-a_t \quad \mbox{ for all } X\in L^{p}\left(  \mathcal{F}_{T}\right),\quad a_t \in L^{p}_+\left(  \mathcal{F}_{t}\right),
\end{equation}
also to deal with stochastic interest rates or ambiguity on interest rates. See El Karoui and Ravanelli \cite{ELK-ravanelli} for a discussion on this subject and for impact on the corresponding BSDEs.
\medskip

Related to the discussion above, a dynamic risk measure induced by a BSDE of a form different from \eqref{eq2.11} is the one consider in Example 7.2 of El Karoui and Ravanelli \cite{ELK-ravanelli}, with the following driver with depends on $y$ but not in $z$
\begin{equation*}
f(t,y)=-\beta_t y,
\end{equation*}
with $(\beta_t)_{t \in [0,T]}$ being an adapted stochastic process. As discussed in \cite{ELK-ravanelli}, the corresponding dynamic risk measure is
\begin{equation} \label{eq: dynamic-csa-1}
 \rho_t(X)= \E \left[ \left. e^{-\int_t^T \beta_s ds} (-X)\right| \mathcal{F}_t \right]   
\end{equation}
and satisfies monotonicity, convexity, cash-subadditivity and $\rho_t(0)=0$.  Moreover, it satisfies
$$
\rho_{tu} (-\rho_{uT}(X))=\rho_{tT}(X), \quad \mbox{ for any } X \in L^{\infty}(\mathcal{F}_T),\quad 0 \leq t \leq u \leq T,
$$
but not $$\rho_{tT} (-\rho_{uT}(X))=\rho_{tT}(X).$$

\begin{example}
Let's assume that the process $(\beta_t)_{t \in [0,T]}$ satisfies a Cox-Ingersoll-Ross (CIR) model to describe the evolution of interest rates. The forward process is given by
$$
d\beta_t = a(b-\beta_t)dt + \sigma\sqrt{\beta_t}dB_t; \qquad \beta_0 = x_0 > 0,
$$
for $a,b,\sigma >0.$ The couple of processes $Y$ and $Z$ satisfy 
\begin{align}\label{eq:cir}
    Y_t = 1- \int_t^T \beta_sY_s ds - \int_t^T Z_s dB_s.
\end{align}
In \cite{cox}, authors derive the explicit solution given by 
\begin{align*}
    Y_t = \Bigg( \frac{2 \gamma \exp\big(\frac{(\gamma + a)(T-t)}{2}\big)}{2\gamma+(\gamma + a)\exp(\gamma(T-t))} \Bigg)^{2ab/\sigma^2} \exp \Big(\frac{2(1-\exp(\gamma(T-t)))\beta_t}{2\gamma+(\gamma + a)\exp(\gamma(T-t))}\Big),
\end{align*}
with $\gamma = \sqrt{a^2 + 2 \sigma^2}.$
\end{example}

More in general, always in Example 7.2 of \cite{ELK-ravanelli}, the following dynamic risk measure satisfying monotonicity, convexity, cash-subadditivity 
\begin{equation} \label{eq: dynamic-csa-2}
 \rho_t(X)= \esssup_{0 \leq r_t \leq \beta_t \leq R_t} \E \left[ \left. e^{-\int_t^T \beta_s ds} (-X)\right| \mathcal{F}_t \right]   
\end{equation}
for some fixed, bounded and adapted processes $(r_t)_{t \in [0,T]}$ and $(R_t)_{t \in [0,T]}$, is also induced by a BSDE with driver
\begin{align}\label{eq:g-sup}
f(t,y)=\sup_{0 \leq r_t \leq \beta_t \leq R_t} (-\beta_t y).    
\end{align}
\bigskip

More in general, risk measures induced by BSDEs with convex drivers $g(t,y,z)$ that are decreasing in $y$ are shown to be cash-subadditive in \cite{ELK-ravanelli}. In the same paper, a dual representation for risk measures induced by BSDEs with convex driver $g(t,y,z)$ that is decreasing in $y$ is also established. 

The risk measures discussed in this Section will be used as benchmark to investigate the performance of the method on signature discussed in the following Section.

\section{Signatures and BSDE} \label{sec: signature}
In this section, we recall some notions from rough path theory. In particular, we use signatures in the Euler-Maruyama scheme for the numerical solution of BSDEs. 
\subsection{Signatures}
In this section, we briefly introduce the notions of rough paths and signatures of paths. In particular, we state the universal approximation theorem for continuous paths introduced in \cite{uat} that we use in the deep learning algorithm. This section is a very brief overview of the rough path tools needed to state this theorem. For a more general overview of rough path theory, we suggest \cite{sig-book} and \cite{friz-book}.\\

Let us denote by $T(\RB^d) = \prod_{k=0}^\infty (\RB^d)^{\otimes k}$ the extended tensor algebra equipped with standard addition, tensor multiplication and scalar multiplication. For $N \in \mathbb{N}$ denote the truncated tensor algebra $T(\RB^d) = \bigoplus_{k=0}^N (\RB^d)^{\otimes k}$. By $\mathcal{C}([0,T],\RB^d)$ we denote a space of continuous paths from $[0,T]$ to $\RB^d$ endowed with uniform topology. 

Consider a partition $\pi_{0,T} = \{0=t_0 < \cdots < t_n=T\}$ of an interval $[0,T]$. For a path $S \in \mathcal{C}([0,T],\RB^d)$ its $p$-variation is defined as 
\begin{equation*}
    \lVert S \rVert_{p-var} = \sup_{\pi_{0,T}}  \left( \sum_{i=0}^{n-1}  \lVert S_{t_{i+1}}- S_{t_i}\rVert^p  \right)^{1/p}.
\end{equation*}
By $\mathcal{C}^p([0,T],\RB^d)$ we denote a subspace of $\mathcal{C}([0,T],\RB^d)$ consisting of paths with finite $p$-variation.

Now, we can present a definition of a signature of a continuous semimartingale that plays a crucial role later on. 

\begin{definition}
\label{def:sig}
    Let $(S_t)_{t \in [0,T]} \in \mathcal{C}^p([0,T],\RB^d)$ be a continuous semimartingale with $p > 2.$ The signature $Sig(S)_{s,t}$ of $S$ on an interval $[s,t] \subseteq [0,T]$ is an element of $T(\RB^d)$ defined by $Sig(S)_{s,t} = (1,\bbS ^1_{s,t},\ldots, \bbS^k_{s,t}, \ldots )$, where
    \begin{equation*}
        \bbS^k_{s,t} = \int_{s < t_1 < \cdots <t_k <t} \circ dS_{t_1} \otimes \cdots \otimes dS_{t_k}.
    \end{equation*}
    We denote by $Sig_{s,t}^N(S) = (1,\bbS^1_{s,t}, \ldots, \bbS^N_{s,t})$ the truncated signature of depth $N$. It has dimension $\frac{d^{N+1} - 1}{d-1}$.
\end{definition}

In the above definition, the integrals are well-defined and understood in the Stratonovich sense. Signatures admit some useful algebraic properties. The most important one is the Chen's relation. It states that for $s < u < t$ it holds 
\begin{align*}
    Sig(S)_{s,t} = Sig(S)_{s,u} \otimes Sig(S)_{u,t}.
\end{align*}
Another algebraic property is related to the following notion of Shuffle products. 

\begin{definition}
        Let $I = \{i_1, \ldots, i_n \}$ and $J = \{j_1, \ldots, j_m\}$. The shuffle product is defined as 
        $$
        e_I \dsqcup e_J = (e_{I'} \dsqcup e_J)\otimes e_{i_n} + (e_I \dsqcup e_{J'})\otimes e_{j_m}, 
        $$
        where $e_I \dsqcup e_{\emptyset} = e_{\emptyset} \dsqcup e_I = e_I.$
    \end{definition}

We call $\mathbf{a} \in T(\RB^d)$ group like if 
    $$
    \langle  \mathbf{b_1}, \mathbf{a} \rangle \langle  \mathbf{b_2}, \mathbf{a} \rangle = \langle \mathbf{b_1} \dsqcup \mathbf{b_2}, \mathbf{a} \rangle,
    $$
    for all $\mathbf{b_1}, \mathbf{b_2} \in T(\mathbb{R}^d)$ and we write $\mathbf{a} \in G(\RB^d).$

This sheds light on the reason for using Stratonovich integral in Definition \ref{def:sig}. Indeed, if one considers a Brownian motion and wants to define its signature using It\^{o} integral, it is not hard to see that it is not group-like. 

The importance of group-like property is later revealed in Theorem \ref{thm:uat}. To state it, we must, however, view a special example of a signature of a semimartingale in a more general setting of Rough path theory.

Let us denote $\Delta_T = \{(s,t) \in [0,T]^2 \mid s \leq t \}$. The following object with values in the truncated tensor algebra is paramount. 

\begin{definition}
        Let $\bS : \Delta_T \to T^N(\RB^d)$ be a continuous map with values 
        $$
        \bS_{s,t} = (\bS^0_{s,t}, \ldots, \bS^N_{s,t}).
        $$
        We call $\bS$ a multiplicative functional of degree $N$ if $\bS^0_{s,t} \equiv 1$ and $\bS_{s,t} = \bS_{s,u} \otimes \bS_{u,t}$, for $s \leq u \leq t.$
    \end{definition}

We see that multiplicative functionals admit the algebraic Chen's equality. We need to endow them with analytical properties as well. This is done by defining control functions that bound their $p$-variation. A control function is a continuous non-negative function $w : \Delta_T \to [0,\infty)$ which is super-additive in a sense that
\begin{align*}
    w(s,u) + w(u,t) \leq w(s,t), 
\end{align*}
for all $s<u<t$ in $[0,T]$.

We say that a multiplicative functional $\bS$ of degree $N$ has a finite $p$-variation controlled by $w$ if 

\begin{align}\label{eq:control}
    \lVert \bS^k_{s,t} \rVert \leq \frac{w(s,t)^{k/p}}{\beta \Gamma(i/p +1)}, \qquad k=1, \ldots , N, \quad (s,t) \in \Delta_T,
\end{align}
where $\beta$ is a constant depending only on $p$.

This additional analytical regularity gives us the extension theorem that is a key building block of rough path theory. The proof can be found in \cite{sig-book}.

 \begin{theorem}\label{thm:extension}
        Let $p \geq 1$ and $\bS$ be a multiplicative functional of degree $N$ with finite $p$-variation, where $\lfloor p \rfloor \leq N.$ Denote by $pr_{\leq N}$ projection on first $N$ components of tensor algebra. Then for every $n > N$ there exists a unique continuous multiplicative functional $\mathbf{R}$ of degree $n$ such that 
        \begin{enumerate}
            \item $pr_{\leq N}(\mathbf{R}) = \bS $
            \item $\mathbf{R}$ has finite $p$-variation. 
        \end{enumerate}
    \end{theorem}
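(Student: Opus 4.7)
The plan is to build $\mathbf{R}$ level-by-level by induction on $k \in \{N, N+1, \ldots, n-1\}$: given a multiplicative functional of degree $k$ satisfying the bound \eqref{eq:control} at levels $\leq k$, I would construct a unique $(k+1)$-th component extending it with the same form of bound at level $k+1$. Since $k \geq N \geq \lfloor p \rfloor$ at every step, the crucial inequality $(k+1)/p > 1$ is in force throughout the induction.

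The construction of the next level uses partition products. For a partition $D = \{s = u_0 < u_1 < \cdots < u_r = t\}$ of $[s,t]$, I would form
$$\bS^D_{s,t} := \bS_{u_0,u_1} \otimes \bS_{u_1,u_2} \otimes \cdots \otimes \bS_{u_{r-1},u_r}$$
in the truncated algebra of degree $k+1$, treating the undefined $(k+1)$-th component of each factor as zero. The components of degree $\leq k$ agree with $\bS_{s,t}$ by multiplicativity of $\bS$, independently of $D$. The candidate $\bS^{k+1}_{s,t}$ is then defined as the limit, as $|D|\to 0$, of the $(k+1)$-th component of $\bS^D_{s,t}$. Convergence is the main obstacle and is the core analytic step: removing an interior point $u_j$ from $D$ changes the level-$(k+1)$ part by $\sum_{i=1}^{k} \bS^{i}_{u_{j-1},u_j} \otimes \bS^{k+1-i}_{u_j,u_{j+1}}$, which by \eqref{eq:control} and the neoclassical inequality for $\sum_i 1/[\Gamma(i/p+1)\Gamma((k+1-i)/p+1)]$ is bounded by a constant multiple of $w(u_{j-1},u_{j+1})^{(k+1)/p}$. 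Choosing the point to drop so that $w(u_{j-1},u_{j+1}) \leq 2 w(s,t)/(r-1)$ and iterating, the resulting series of corrections telescopes thanks to the super-additivity of $w$ and the exponent $(k+1)/p > 1$; this gives both existence of the limit and a bound of the form $\lVert \bS^{k+1}_{s,t}\rVert \leq w(s,t)^{(k+1)/p}/[\beta\, \Gamma((k+1)/p+1)]$, preserving \eqref{eq:control} at level $k+1$.

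Multiplicativity of the extension is inherited from the partition product: for any $u \in (s,t)$, restricting to partitions $D$ that contain $u$ factors $\bS^D_{s,t} = \bS^{D_1}_{s,u}\otimes \bS^{D_2}_{u,t}$, and passing to the limit yields Chen's relation at level $k+1$. Continuity of $\bS^{k+1}$ in $(s,t)$ follows from the $p$-variation bound. For uniqueness, I would argue inductively on $k$: if $\mathbf{R}$ and $\mathbf{R}'$ are two extensions that already agree at levels $\leq k$, multiplicativity forces the level-$(k+1)$ discrepancy $\Delta_{s,t} := \mathbf{R}^{k+1}_{s,t} - (\mathbf{R}')^{k+1}_{s,t}$ to be additive, $\Delta_{s,t} = \Delta_{s,u}+\Delta_{u,t}$; combined with the common bound $|\Delta_{s,t}| \leq C\, w(s,t)^{(k+1)/p}$ and $(k+1)/p > 1$, a routine partition-sum estimate shows $\sum_j |\Delta_{u_j,u_{j+1}}| \to 0$ as $|D|\to 0$, hence $\Delta \equiv 0$, completing the induction.
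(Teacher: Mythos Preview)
Your outline is correct and is precisely the classical Lyons argument: inductive level-by-level extension via partition products, convergence controlled through the maximal-removal / neoclassical-inequality estimate, and uniqueness from additivity of the top-level discrepancy combined with the exponent $(k+1)/p>1$. The paper itself does not give a proof of this theorem but simply refers to \cite{sig-book}, and your sketch is exactly the proof found there (and in \cite{friz-book}), so there is nothing to compare---you have reproduced the referenced argument faithfully.
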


Now, we can finally define a rough path. 

\begin{definition}
    Let $p \geq 1$. We call $\bS$ a $p$-rough path if $\bS$ is a multiplicative functional of degree $\lfloor p  \rfloor$ with finite $p$-variation. We denote the space of such objects $\Omega_T^p(\RB^d).$ Furthermore if $\bS$ takes values in $G^{\lfloor p \rfloor}(\RB^d)$ we call it a weakly-geometric $p$-rough path and denote their space $\mathcal{W}\Omega_T^p(\RB^d).$
\end{definition}

By lifting the path to level two, we obtain an additional structure that allows one to construct pathwise integrals and, through that, differential equations called rough differential equations.

Observe that a truncated signature $Sig^{\lfloor p \rfloor}(S)_{s,t}$ from Definition \ref{def:sig} is a weakly-geometric rough path, while the whole signature $Sig(S)_{s,t}$ is its unique extension given by Theorem \ref{thm:extension}. 

We can now present the two main results that will be relevant in the following part of this paper. Generally, a signature does not determine the path uniquely but only up to a tree-like equivalence. However, this technical restriction can be omitted if we restrict ourselves to the paths with at least one strictly increasing component. As we explain later, this assumption actually comes naturally in our setting. The proof of the following proposition and a more detailed view of the uniqueness of signatures can be found in \cite{sig-unique}.

\begin{proposition}
    Let $S$ and $R$ be continuous $\RB^d$-valued semimartingales and denote with $\widehat{S} = (t,S_t)$ and $\widehat{R} = (t,R_t)$ their augmentations with time. Then $$Sig(\widehat{S})_T = S(\widehat{R})_T \text{ if and only if } S = R.$$
\end{proposition}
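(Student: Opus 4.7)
My plan is to handle the two directions separately. The implication $S = R \Rightarrow Sig(\widehat{S})_T = Sig(\widehat{R})_T$ is immediate from Definition \ref{def:sig}: if the two semimartingales coincide almost surely, then so do their time-augmentations $\widehat{S}$ and $\widehat{R}$, and hence every iterated Stratonovich integral defining the components of the signature agrees almost surely. No work is required beyond invoking the definition.

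For the substantive converse, I would appeal to the signature uniqueness theorem originally due to Hambly--Lyons in the bounded variation case and extended to (weakly-geometric) rough paths by Boedihardjo--Geng--Lyons--Yang: the signature of a weakly-geometric $p$-rough path determines the path up to \emph{tree-like equivalence}, i.e.\ up to cancellation of excursions that are exactly retraced. The rough-path machinery recalled just before the statement (Theorem \ref{thm:extension} and the group-like property of the Stratonovich lift) guarantees that $\widehat{S}$ and $\widehat{R}$ can be viewed as weakly-geometric $p$-rough paths for any $p \in (2,3)$, so the hypothesis of the uniqueness theorem is met.

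The heart of the argument is that the time-augmentation kills all tree-like ambiguity. Suppose $Sig(\widehat{S})_T = Sig(\widehat{R})_T$. By the above uniqueness theorem, $\widehat{S}$ and $\widehat{R}$ differ at most by a tree-like excursion, meaning there exists a parametrization under which one can be deformed into the other by inserting back-and-forth trips. But any such retracing would force the first coordinate, namely $t$, to move backwards at some point, contradicting the strict monotonicity of the time component. Hence $\widehat{S}$ and $\widehat{R}$ must coincide pointwise almost surely, and projecting out the time coordinate gives $S = R$ a.s.

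The step I expect to be the main obstacle is the measure-theoretic bookkeeping around the rough-path construction rather than the algebraic core: the Stratonovich iterated integrals defining $Sig(\widehat{S})$ and $Sig(\widehat{R})$ are only defined up to null sets, and the hypothesis of equal signatures must be interpreted on a common full-measure event. One must show that the null sets arising from the two lifts, and from the weakly-geometric rough-path regularity in \eqref{eq:control}, can be combined so that the pathwise uniqueness theorem can be applied $\omega$ by $\omega$ on a single event of full probability, yielding $S = R$ almost surely. Modulo this careful handling of exceptional sets, the argument is essentially a direct application of the Boedihardjo--Geng--Lyons--Yang result combined with the geometric observation about strictly monotone coordinates.
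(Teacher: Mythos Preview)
Your proposal is correct and follows exactly the route the paper itself indicates: the paper does not give an in-text proof of this proposition but refers the reader to \cite{sig-unique} (Boedihardjo--Geng--Lyons--Yang), whose main result is precisely the signature-uniqueness-up-to-tree-like-equivalence theorem you invoke, together with the observation that a strictly monotone coordinate (here the time component) rules out nontrivial tree-like excursions. Your discussion of the measure-theoretic bookkeeping is a reasonable caveat but not a genuine obstacle, so nothing further is needed.
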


Now, we present the universal approximation theorem, the proof of which can be found in \cite{uat}. 

\begin{theorem}\label{thm:uat}
        Let $K \subset \mathcal{W}\Omega_T^p(\RB^d)$ be a compact set and let $f: \mathcal{W}\Omega_T^p(\RB^d) \to \RB$ be a continuous functional. Then for any $\varepsilon > 0$ there exists some $\ell \in T(\RB^d)$ such that 
        $$
        \sup_{\bX \in K} |f(\bX) - \langle \ell , S(\bX)_{0,T} \rangle | < \varepsilon.
        $$
    \end{theorem}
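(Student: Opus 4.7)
The plan is to apply the Stone--Weierstrass theorem to the family
$$\AC \;=\; \{\,\bX \mapsto \langle \ell, S(\bX)_{0,T} \rangle \,:\, \ell \in T(\RB^d)\,\}$$
viewed as a subspace of $C(K, \RB)$. To invoke Stone--Weierstrass I would verify that (i) each element of $\AC$ is continuous on $K$, (ii) $\AC$ contains the constants, (iii) $\AC$ is closed under pointwise products (hence is a subalgebra), and (iv) $\AC$ separates points of $K$. Density of $\AC$ in $C(K,\RB)$ then produces, for the given continuous $f$ and $\varepsilon>0$, an $\ell \in T(\RB^d)$ satisfying the announced uniform bound.

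Steps (i) and (ii) are essentially bookkeeping. Continuity of $\bX \mapsto \langle \ell, S(\bX)_{0,T} \rangle$ on $\WC\Omega_T^p(\RB^d)$ follows from continuity of the signature extension map (Theorem \ref{thm:extension}, together with the standard $p$-variation stability estimate built into the control bound \eqref{eq:control}) composed with the continuity of the pairing with a fixed $\ell$. The constants lie in $\AC$ because the choice $\ell=(c,0,0,\ldots)$ produces the constant function $c$, since the zeroth level of every signature equals $1$.

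Step (iii) is where the weakly-geometric hypothesis is essential. For $\bX \in \WC\Omega_T^p(\RB^d)$, the signature $S(\bX)_{0,T}$ takes values in $G^{\lfloor p \rfloor}(\RB^d)$, i.e. is group-like, so the shuffle identity recalled in the text gives
$$\langle \ell_1, S(\bX)_{0,T}\rangle \,\langle \ell_2, S(\bX)_{0,T}\rangle \;=\; \langle \ell_1 \dsqcup \ell_2, S(\bX)_{0,T}\rangle,$$
and the right-hand side is itself of the form defining $\AC$. Closure under addition and scalar multiplication is immediate from linearity of the pairing in $\ell$.

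Step (iv) is the main obstacle and the reason for stating the preceding uniqueness proposition. Without further hypotheses, the signature only determines a path up to tree-like equivalence, so $\AC$ need not separate points of an arbitrary compact $K \subset \WC\Omega_T^p(\RB^d)$. The standard remedy, which is the one tacitly adopted in the sequel of the paper, is to restrict to paths carrying a strictly monotone component (for example, by systematically working with time-augmented processes $\widehat{S}=(t,S_t)$); the uniqueness statement of \cite{sig-unique} then ensures that $\bX \mapsto S(\bX)_{0,T}$ is injective on such $K$, so that for distinct $\bX,\bY \in K$ some coordinate $\ell \in T(\RB^d)$ separates their signatures. With (i)--(iv) in place, Stone--Weierstrass delivers density of $\AC$ in $C(K,\RB)$ in the uniform topology, and hence the desired approximation property.
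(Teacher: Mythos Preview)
Your proposal is correct and follows essentially the same approach the paper indicates: the paper does not give a self-contained proof of this theorem but cites \cite{uat} and sketches the idea as ``check the needed properties to apply the Stone--Weierstrass theorem'' with the linear (rather than polynomial) structure coming from the group-like/shuffle identity---precisely your steps (i)--(iii). Your observation in step (iv), that point separation fails for arbitrary compact $K$ without a monotone-component or tree-like-triviality hypothesis, is a genuine technical caveat the paper glosses over in the theorem statement but tacitly handles later by always working with time-augmented paths $\widehat{B}=(t,B_t)$; you are right to flag it.
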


The main takeaway from the theorem above is that any continuous functional of a weakly-geometric rough path can be approximated arbitrarily well by a linear function of a signature. The main idea of the proof is to check the needed properties to apply the Stone-Weierstrass theorem, which gives us the approximation part. The linear structure of this approximation is due to the fact that weakly-geometric rough paths are group-like. Then, the polynomial can be rewritten linearly as a shuffle product of higher terms of a signature.

 \medskip


Let $X \in L^p(\FC_T)$ for $p \geq 2$ and let us assume that $(\FC_t)_{t \geq 0}$ is generated by Brownian motion $B$. Furthermore, put $\mu_t = \E[ X | \FC_t ].$ Here, we present an algorithm for directly estimating conditional expectation $\mu_t$. This approach relies on the use of signatures of paths. In particular, we use Theorem \ref{thm:uat} that allows us to estimate $\mu_t$ as a function of the signature of time-extended Brownian motion $B$. Note that taking a time-extended Brownian motion also makes sense from a heuristic perspective since $\mu_t$ is also a function of time. Furthermore, this approximation function is linear, which enables us to use fast and traceable algorithms such as linear or ridge regression. The algorithm is presented below, where we work with time-discrete approximations of processes on time partition $\pi = \{0=t_1 < t_2 < \cdots < t_N =T\}$, while the signature is truncated at depth $D$.

\begin{algorithm}[H]
\caption{\textbf{CE-SIG} Algorithm}
\label{algo:cex}
\begin{algorithmic}

\Require Target value $\{X^{j}\}^{1 \leq j \leq M}$, discrete Brownian path $\{B^{j}_{i}\}^{1 \leq j \leq M}_{0 \leq i \leq k}$ and signature depth $D$.
\State Put $\{\hat{B}^{j}_{i}\}^{1 \leq j \leq M}_{0 \leq i \leq k} = \{(B^{j}_{i},t_k)\}^{1 \leq j \leq M}_{0 \leq i \leq k}$

    \State Compute the signatures $S^j = S_{0,t_k}^D(\hat{B}^j_{0:k})$ of depth $D$ on time interval $[0,t_k]$ for each $j=1, \ldots,M$
    \State Obtain optimal coefficients $c^j = \{c_l\}_{0 \leq l \leq D}$ by ridge regression on pairs $S^j$ and $X^j$ for $j=1, \ldots,M$
    \State Estimate $\mu_k^j = \langle c^j, [1,S^j] \rangle$ for each  $j=1, \ldots,M$    
\State \Return $\{Y_k^j\}^{1 \leq j \leq M}$
\end{algorithmic}
\end{algorithm}

Of course, we work with a truncated signature in the algorithm above. Justification for this approach is outlined in Lemma \ref{lemma}, where we discuss the convergence of our algorithm.

\subsection{SIG-BSDE Algorithm} 
In this section, we propose an algorithm for finding a numerical solution of a BSDE using a variant of the least square regression approach. The backward Euler-Maruyama scheme for BSDEs was proposed in \cite{bem}. In \cite{bem-conv} and \cite{bem-conv2}, convergence results for the scheme were introduced. In \cite{bem-reg}, the authors rely on regression on function bases to estimate conditional expectations in the above mentioned schemes and provide convergence results. However, the choice of a specific function basis is quite arbitrary. Here, we propose a more universal approach using signatures of paths. 

Consider now the following decoupled forward-backward system

\begin{align}\label{eq:fbsde}
    X_t &= x + \int_0^t b(s,Xs)ds + \int_0^t\sigma(s,X_s)dBs \\
    Y_t &= g(X_T) + \int_t^T f(s,X_s,Y_s,Z_s)ds - \int_t^TZ_sdB_s,
\end{align}
where, for simplicity of notation, all processes are assumed to be one-dimensional. 

We will work under the following:
\begin{assumption}\label{assumption}$ $\newline
     \vspace{-0.8cm}\begin{enumerate}[(i)]
    \item functions $b,\sigma,f$ and $g$ are deterministic and $b(\cdot,0),\sigma(\cdot,0),f(\cdot,0,0,0)$ and $g(0)$ are bounded. 
    \item  $b, \sigma,f$ and $g$ are uniformly Lipschitz continuous in $(x,y,z)$ with Lipschitz constant $L$. 
    \item  $b,\sigma$ and $f$ are uniformly H\"{o}lder-$\frac{1}{2}$ continuous in $t$ with H\"{o}lder constant $L$.
\end{enumerate}
\end{assumption}

Recall the time partition $\pi$ from the previous section. The forward process is approximated by a numerical solution on partition $\pi$ through the Euler-Maruyama scheme 
\begin{align}\label{eq:fem}
    X^N_{{k+1}} = X_{k}^N + b(t_k, X_{k}^N)\Delta_t + \sigma(t_k,X_k^N) \Delta B_{k+1}, \quad X^N_{0} = x, \quad k=0, \ldots, N-1.
\end{align}
Here $\Delta_t$ denotes the time increment in equidistant partition $\pi$ while $\Delta B_k$ denote independent $\mathcal{N}(0, \Delta_t)$ distributed random variables representing the Brownian motion increments. 

The backward Euler-Maruyama scheme is given by initializing $Y^N_N = g(X^N_N)$ and 
\begin{align}\label{eq:bem}
    Z^N_k = \frac{1}{\Delta_t} \E_k\left[ Y^N_{k+1} \Delta B_{k+1} \right], \quad Y^N_{k+1} = \E_k \left[ Y^N_{k+1} + f(t_k,X^N_k, Y^N_{k+1}, Z_k^N) \Delta_t \right], 
\end{align}
for $k=N-1, \ldots,0,$ where $\E_k$ denotes conditional expectation with respect to $\FC_{t_k}.$

The motivation for the above scheme is the following. We have 
\begin{align*}
    Y_{t_k} &= Y_{t_{k+1}} + \int_{t_k}^{t_{k+1}} f(t,X_t,Y_t,Z_t) - \int_{t_k}^{t_{k+1}} Z_t dB_t \\
    &\approx Y_{t_{k+1}} + f(t,X_{t_k},Y_{t_k},Z_{t_k}) \Delta_t - Z_{t_k} \Delta B_{k+1}.
\end{align*}
If we multiply the above expression 
with $\Delta B_{k+1}$ and apply conditional expectation at time $t_k$ to both sides of the equation, we obtain 
\begin{align*}
    Z_{t_k} = \frac{1}{\Delta_t} \E_k[Y_{t_{k+1}} \Delta B_{k+1} ].
\end{align*}
Furthermore, by the same argument and by substituting $Y_{t_k}$ with $Y_{t_{k+1}}$ in the driver $f$, we obtain 
\begin{align*}
    Y_{t_k} = \E_k[Y_{t_{k+1}} + f(t_k,X_{t_k},Y_{t_{k+1}},Z_{t_k})],
\end{align*}
which provides us with an explicit backward Euler-Maruyama scheme. 

Another possibility is to consider an implicit scheme where $Y_{t_k}$ is obtained as a solution of 

\begin{align*}
    Y_{t_k} = \E_k[Y_{t_{k+1}} + f(t_k,X_{t_k},Y_{t_{k}},Z_{t_k})],
\end{align*}
through Picard iteration. 

In the last step, we use the signature method to estimate conditional expectation in \eqref{eq:bem} and thus obtain the numerical solution, as described in the following algorithm.

\begin{algorithm}[H]
\caption{\textbf{SIG-BSDE} Algorithm}
\label{algo:SIG-BSDE}
\begin{algorithmic}

\Require Terminal values $\{Y^j_N \}^{1 \leq j \leq M}$, forward process $\{X^i_{n}\}^{1 \leq i \leq M}_{0 \leq n \leq N}$  Brownian motion samples $\{B^i_{n}\}^{1 \leq i \leq M}_{0 \leq n \leq N}$.
\For{$N-1 \leq k \leq 0$}
    \State Put $Z_k^j = \frac{1}{\Delta_t} \E \left[ Y_{k+1}^j (B^j_{k+1}-B_k^j) \mid \FC_{t_k} \right]$ using \textbf{CE-SIG}
    \State Put $Y_{k}^j = \E \left[ Y_{k+1}^j + f(t_k, X_k^j, Y_{k+1}^j, Z_k^j) \Delta_t \mid \FC_{t_k}  \right]$ using \textbf{CE-SIG}
\EndFor    
\State \Return BSDE solution $(Y,Z)$
\end{algorithmic}
\end{algorithm}

\subsection{Convergence results}

Here, we present a convergence result for the numerical scheme presented in the previous section. The convergence can be obtained following the next steps. First, we need to consider the convergence of forward and backward Euler-Maruyama schemes described in the previous section. Then, it remains to show that we can efficiently estimate conditional expectations in the backward scheme \eqref{eq:bem}, for which we rely on the universal approximation Theorem \ref{thm:uat} and properties of truncated signatures. 

We begin by stating the well-known convergence theorems for the forward and backwards Euler-Maruyama schemes. Proofs of both can be found in \cite{bsde-book}.

\begin{theorem}
    \label{thm:fem-conv}
    Let $X$ and $X^N$ be as in \eqref{eq:fbsde} and \eqref{eq:fem}, respectively. Suppose Assumption \ref{assumption} holds. Then there exists a constant $C>0$ such that 
    \begin{align*}
        \max_{0 \leq k \leq N-1} \E \left[ \sup_{t_k \leq t \leq t_{k+1}} |X_t-X^N_k|^2 \right] \leq C\left(1+|x|^2 \right)\Delta_t.
    \end{align*}
\end{theorem}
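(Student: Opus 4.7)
The plan is to follow the classical strong-convergence argument for Euler--Maruyama, splitting the error into a ``grid-point'' piece and a ``within-interval'' piece. First, I would establish standard a priori $L^2$-moment bounds
\[
\E\Big[\sup_{0\le t\le T}|X_t|^2\Big] + \max_{0\le k\le N}\E[|X^N_k|^2] \le C(1+|x|^2),
\]
using Assumption \ref{assumption}(i)--(ii) (linear growth of $b,\sigma$ follows from Lipschitz plus boundedness at $0$), the Burkholder--Davis--Gundy (BDG) inequality, and (continuous and discrete) Gronwall. These bounds will be reused throughout.

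Second, I would control the grid error $e_k := \E[|X_{t_k}-X^N_k|^2]$. Subtracting \eqref{eq:fem} from the SDE and writing, for each $s\in[t_k,t_{k+1}]$,
\begin{align*}
b(s,X_s)-b(t_k,X^N_k) &= \bigl(b(s,X_s)-b(s,X_{t_k})\bigr)+\bigl(b(s,X_{t_k})-b(t_k,X_{t_k})\bigr)+\bigl(b(t_k,X_{t_k})-b(t_k,X^N_k)\bigr),
\end{align*}
and analogously for $\sigma$, I would bound the three pieces by Lipschitz-in-space (giving $L|X_s-X_{t_k}|$ and $L|X_{t_k}-X^N_k|$) and H\"older-$\tfrac12$-in-time (giving $L(s-t_k)^{1/2}\le L\Delta_t^{1/2}$). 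Using BDG on the $\sigma$-integral and Cauchy--Schwarz on the $b$-integral, together with the standard one-step estimate $\E[\sup_{s\in[t_k,t_{k+1}]}|X_s-X_{t_k}|^2]\le C(1+|x|^2)\Delta_t$ (another BDG/Gronwall calculation), yields a recursion of the form
\[
e_{k+1} \le (1+C\Delta_t)\,e_k + C(1+|x|^2)\Delta_t^2.
\]
Discrete Gronwall then gives $\max_k e_k \le C(1+|x|^2)\Delta_t$.

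Third, to upgrade from the grid to the continuous interpolation requested by the statement, I would write, for $t\in[t_k,t_{k+1}]$,
\[
X_t - X^N_k = (X_{t_k}-X^N_k) + \int_{t_k}^t b(s,X_s)\,ds + \int_{t_k}^t \sigma(s,X_s)\,dB_s.
\]
Taking $\sup_{t\in[t_k,t_{k+1}]}$ and then $\E[\,\cdot\,]$, applying BDG to the stochastic integral and Cauchy--Schwarz to the drift, and invoking the a priori moment bound on $X$ to handle $\E\int_{t_k}^{t_{k+1}}(|b(s,X_s)|^2+|\sigma(s,X_s)|^2)\,ds \le C(1+|x|^2)\Delta_t$, gives
\[
\E\Big[\sup_{t_k\le t\le t_{k+1}}|X_t-X^N_k|^2\Big] \le 3\,e_k + C(1+|x|^2)\Delta_t.
\]
Combining with the grid bound from step two and taking the maximum over $k$ yields the desired estimate with a single constant $C$ depending only on $T$ and $L$.

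The main obstacle is not conceptual but organizational: one must be careful that every Lipschitz/H\"older splitting produces only $O(\Delta_t^2)$ ``remainder'' terms in the recursion (so that discrete Gronwall still gives $O(\Delta_t)$ rather than $O(1)$), which forces the drift integral to be handled via Cauchy--Schwarz in the form $|\int_{t_k}^{t_{k+1}}\cdots ds|^2\le \Delta_t\int_{t_k}^{t_{k+1}}|\cdots|^2 ds$. Otherwise the argument is essentially bookkeeping on constants, and the H\"older-$\tfrac12$ regularity in time from Assumption \ref{assumption}(iii) is precisely what is needed to match the stochastic-integral scale.
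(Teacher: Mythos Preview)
Your proposal is correct and follows the standard strong-convergence argument for the Euler--Maruyama scheme. Note, however, that the paper does not give its own proof of this theorem: it simply states the result as well known and refers to \cite{bsde-book} for the proof, so there is no in-paper argument to compare against. The route you sketch---a priori $L^2$ moment bounds, a one-step recursion $e_{k+1}\le(1+C\Delta_t)e_k+C(1+|x|^2)\Delta_t^2$ closed by discrete Gronwall, followed by a BDG upgrade from the grid to the $\sup$ over $[t_k,t_{k+1}]$---is exactly the classical proof one finds in that reference, so your approach is in line with what the paper implicitly invokes.
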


\begin{theorem}
    \label{thm:bem-conv}
    Let $Y$ and $Y^N$ be as in \eqref{eq:fbsde} and \eqref{eq:bem}, respectively. Suppose Assumption \ref{assumption} holds. Then, there exists a constant $C>0$ such that 
    \begin{align*}
        \max_{0 \leq k \leq N-1} \E \left[ \sup_{t_k \leq t \leq t_{k+1}} |Y_t-Y^N_k|^2 \right] + \sum_{k=0}^{N-1} \E 
\left[ \int_{t_k}^{t_{k+1}} |Z_t - Z_{t_k}^N |^2 dt  \right] \leq C\left(1+|x|^2 \right)\Delta_t.
    \end{align*}
\end{theorem}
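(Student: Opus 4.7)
The plan is to follow the classical backward-induction proof resting on a discrete Grönwall inequality, with the error bound at the grid points propagated from $k=N$ down to $k=0$ and the interior sup-norm error recovered via BDG/Doob applied to the martingale part of the BSDE. The key ingredients are: Lipschitz continuity of $f$ and $g$; the forward error bound from Theorem \ref{thm:fem-conv}; Hölder-$\tfrac{1}{2}$ time regularity in $L^2$ of $Y$; and Zhang's $L^2$-path regularity of $Z$, i.e., $\sum_{k=0}^{N-1}\E\!\int_{t_k}^{t_{k+1}}\!|Z_s-\bar Z_{t_k}|^2\,ds=O(\Delta_t)$, where $\bar Z_{t_k}:=\tfrac{1}{\Delta_t}\E_k\!\int_{t_k}^{t_{k+1}}\!Z_s\,ds$ is the $\FC_{t_k}$-measurable $L^2$-projection of $Z$ on $[t_k,t_{k+1}]$.

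The mechanics are as follows. From the integral form of the BSDE, applying $\E_k$ and $\tfrac{1}{\Delta_t}\E_k[\,\cdot\,\Delta B_{k+1}]$ respectively yields exact identities of the same shape as \eqref{eq:bem}:
$$Y_{t_k}=\E_k[Y_{t_{k+1}}]+\E_k\!\int_{t_k}^{t_{k+1}}\!f(s,X_s,Y_s,Z_s)\,ds,\qquad \Delta_t\,\bar Z_{t_k}=\E_k[Y_{t_{k+1}}\Delta B_{k+1}]+\varepsilon_k,$$
where $\|\varepsilon_k\|_{L^2}=O(\Delta_t^{3/2})$ by conditional Cauchy--Schwarz. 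Subtracting \eqref{eq:bem} and setting $\delta Y_k=Y_{t_k}-Y_k^N$, $\delta Z_k=\bar Z_{t_k}-Z_k^N$, one gets
$$\delta Y_k=\E_k[\delta Y_{k+1}]+\Delta_t\,\Delta f_k+R_k^Y,\qquad \Delta_t\,\delta Z_k=\E_k[\delta Y_{k+1}\Delta B_{k+1}]+R_k^Z,$$
with $|\Delta f_k|\le L(|\delta X_k|+|\delta Y_{k+1}|+|\delta Z_k|)$ by Lipschitzness of $f$, and residuals of $L^2$-norm $O(\Delta_t^{3/2})$ thanks to the Hölder regularity of $(X,Y)$, Theorem \ref{thm:fem-conv}, and the $L^2$-regularity of $Z$.

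Squaring the first recursion and applying Young's inequality with parameter $\gamma\Delta_t$ gives
$$\E|\delta Y_k|^2\le(1+C\Delta_t)\,\E|\delta Y_{k+1}|^2+\tfrac{1}{\gamma}\Delta_t\,\E|\delta Z_k|^2+C\Delta_t\,\E|\delta X_k|^2+O(\Delta_t^2).$$
Squaring the $\delta Z_k$ identity and using the conditional isometry bound $\E|\E_k[\delta Y_{k+1}\Delta B_{k+1}]|^2\le\Delta_t\big(\E|\delta Y_{k+1}|^2-\E|\E_k\delta Y_{k+1}|^2\big)$ produces a matching control of $\Delta_t\,\E|\delta Z_k|^2$ by the decrement of $\E|\delta Y|^2$. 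Taking $\gamma$ large enough to absorb the $\tfrac{1}{\gamma}\Delta_t\,\E|\delta Z_k|^2$ term and telescoping from the terminal initialization $\E|\delta Y_N|^2\le L^2\,\E|X_T-X_N^N|^2=O(\Delta_t)$ (Theorem \ref{thm:fem-conv}) yields, via discrete Grönwall, $\max_k\E|\delta Y_k|^2+\sum_k\Delta_t\,\E|\delta Z_k|^2=O(\Delta_t)$. The $\sup$-in-$t$ estimate follows from $Y_t-Y_k^N=(Y_t-Y_{t_k})+\delta Y_k$ together with BDG applied to $\int_{t_k}^t Z_s\,dB_s$, and the continuous $Z$-error sum from $|Z_s-Z_k^N|^2\le 2|Z_s-\bar Z_{t_k}|^2+2|\delta Z_k|^2$ combined with Zhang's $L^2$-regularity.

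The principal obstacle is this balancing of $Z$-terms: the $\tfrac{1}{\gamma}\Delta_t\,\E|\delta Z_k|^2$ produced by Young on the Lipschitz bound of $\Delta f_k$ in its $z$-argument must be dominated by the negative contribution extracted from the $\delta Z_k$ identity, which is precisely where the structural square-integrability of $Z$ (and Zhang's regularity result) is invoked. Without the correct reference $\bar Z_{t_k}$ the telescoping fails; once this reference is chosen, the remaining steps are routine manipulations of Young's inequality and conditional expectations.
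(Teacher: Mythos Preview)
The paper does not actually supply a proof of this theorem: immediately before the statement it says that ``Proofs of both can be found in \cite{bsde-book}'' and then simply quotes the result. Your outline is precisely the standard argument that reference contains---backward recursion for $\delta Y_k,\delta Z_k$, the conditional Cauchy--Schwarz bound $\E|\E_k[\delta Y_{k+1}\Delta B_{k+1}]|^2\le\Delta_t\big(\E|\delta Y_{k+1}|^2-\E|\E_k\delta Y_{k+1}|^2\big)$ to absorb the $Z$-term, discrete Gr\"onwall from the terminal error, and Zhang's $L^2$-regularity of $Z$ together with BDG for the sup-in-$t$ part---so there is nothing to compare beyond noting that your sketch fills in what the paper deliberately outsourced. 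One minor point worth making explicit when you write it up is the dependence of all the $O(\Delta_t)$ constants on $(1+|x|^2)$, which enters through Theorem~\ref{thm:fem-conv} and through the a~priori bounds on $X$, $Y$, $Z$; your sketch treats these as generic constants, which is fine at this level but should be tracked if you want the exact form of the right-hand side stated in the theorem.
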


To obtain the convergence it suffices to estimate the conditional expectations in \eqref{eq:bem} with truncated signatures arbitrarily well. We prove the following lemma.

\begin{lemma}\label{lemma}
    Let $\xi$ be an $\FC_t$-measurable random variable and let us denote $\mu_t = \E_t\left[\xi \right]$. Furthermore, let $K \subset \mathcal{W}\Omega^p_T(\RB^d)$ be a compact set, denote an augmented Brownian motion with $\widehat{B}_t = (t,B_t)$ and with $\widehat{\mathbf{B}}$ its geometric rough path. Then for any $\varepsilon > 0$, there exist a positive integer $D$ and $\ell \in T(\RB^2)$, such that a.s.
    \begin{align*}
        \sup_{t \in [0,T]} \sup_{\widehat{\mathbf{B}} \in K} |\mu_t - \langle \ell, Sig^D(\widehat{\mathbf{B}}_{0:t}) \rangle | < \varepsilon.
    \end{align*}
\end{lemma}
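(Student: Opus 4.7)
The plan is to reduce the claim to a direct application of the universal approximation Theorem~\ref{thm:uat}, by exhibiting $\mu_t$ as a continuous functional on a suitable compact set of rough paths on the fixed interval $[0,T]$.

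First I would represent $\mu_t$ as a functional of the restricted path. Interpreting the hypothesis as $\xi \in L^2(\FC_T)$, the martingale representation theorem gives $\mu_t = \E[\xi] + \int_0^t H_s \, dB_s$ for some predictable $H \in H_{\FB}^2$. By density of smooth simple integrands in $H_{\FB}^2$, up to arbitrarily small $L^2$-error I may replace the stochastic integral by a continuous functional of the Brownian path on $[0,t]$, and hence of the restricted rough path $\widehat{\mathbf{B}}_{0:t}$; write this as $\Psi(t, \widehat{\mathbf{B}}_{0:t})$ with $\Psi$ jointly continuous in its arguments.

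Next I would reduce to a compact set of rough paths on the fixed interval $[0,T]$. Define the stopped rough path $\widehat{\mathbf{B}}^t \in \mathcal{W}\Omega_T^p(\RB^2)$ by keeping the $B$-component constant after time $t$ (with time continuing linearly). The map $(t,\widehat{\mathbf{B}}) \mapsto \widehat{\mathbf{B}}^t$ is continuous from $[0,T] \times K$ into $\mathcal{W}\Omega_T^p(\RB^2)$, so its image $K^*$ is compact. On $K^*$ the functional $F:\widehat{\mathbf{B}}^t \mapsto \Psi(t, \widehat{\mathbf{B}}_{0:t})$ is continuous, since both $t$ and $\widehat{\mathbf{B}}_{0:t}$ can be recovered from $\widehat{\mathbf{B}}^t$. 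Applying Theorem~\ref{thm:uat} to $F$ on $K^*$ then yields $\ell \in T(\RB^2)$ and $D \in \mathbb{N}$ with
$$\sup_{\widehat{\mathbf{B}}^t \in K^*}\bigl|F(\widehat{\mathbf{B}}^t) - \langle \ell, Sig^D(\widehat{\mathbf{B}}^t)\rangle\bigr| < \varepsilon/2.$$
Finally, Chen's relation gives $Sig^D(\widehat{\mathbf{B}}^t) = Sig^D(\widehat{\mathbf{B}}_{0:t}) \otimes Sig^D(\widehat{\mathbf{B}}^t_{t:T})$, and the second factor is the signature of an explicit smooth path (only its time component varies after $t$) whose entries are polynomials in $T-t$. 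Since $t$ is already encoded in the first level of $Sig^D(\widehat{\mathbf{B}}_{0:t})$, shuffle-product identities allow $\langle \ell, Sig^D(\widehat{\mathbf{B}}^t)\rangle$ to be rewritten as $\langle \tilde{\ell}, Sig^{D'}(\widehat{\mathbf{B}}_{0:t})\rangle$ for a single $\tilde{\ell}$ independent of $t$ (possibly at a larger depth $D'$), delivering the claim.

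The step I expect to be the main obstacle is the uniformity in $t$: Theorem~\ref{thm:uat} gives approximation for continuous functionals on compact subsets of rough paths on a fixed interval, whereas our signatures live on intervals $[0,t]$ of varying length. The stopping-extension trick combined with Chen's relation and shuffle identities is the natural route, but care is required to check that the extension keeps the path inside $\mathcal{W}\Omega_T^p(\RB^2)$ with uniformly controlled $p$-variation, that $K^*$ is indeed compact, and that the $t$-dependent factors coming from $Sig^D(\widehat{\mathbf{B}}^t_{t:T})$ can genuinely be absorbed into a single linear functional rather than a $t$-indexed family.
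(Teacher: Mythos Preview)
Your route is substantially different from the paper's. The paper's proof is very short and proceeds in two steps: first, it simply \emph{asserts} that $\mu_t$ can be viewed as a continuous functional $\mu(\widehat{\mathbf{B}})$ of the restricted rough path and applies Theorem~\ref{thm:uat} directly to obtain $\ell$ with
\[
\sup_{t\in[0,T]}\sup_{\widehat{\mathbf{B}}\in K}\bigl|\mu_t-\langle\ell,Sig(\widehat{\mathbf{B}}_{0:t})\rangle\bigr|<\tfrac{\varepsilon}{2};
\]
second, it passes from the full signature to the truncated one by bounding $\lVert Sig(\widehat{\mathbf{B}}_{0:t})-Sig^D(\widehat{\mathbf{B}}_{0:t})\rVert=\sum_{i\geq D+1}\lVert\widehat{\mathbb{B}}^i_{0:t}\rVert$ via the factorial decay estimate~\eqref{eq:control}, using compactness of $K$ to make this uniform. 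There is no martingale representation, no stopping/extension trick, and no shuffle manipulation. What your approach buys is an honest attempt to justify the two points the paper glosses over --- why $\mu_t$ is a continuous functional of the rough path, and how a single $\ell$ can work simultaneously for all intervals $[0,t]$ --- at the cost of considerably more machinery.

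That said, there is a genuine gap in your first step. Density of smooth simple integrands in $H^2_{\mathbb{F}}$ gives you an approximation of $\int_0^t H_s\,dB_s$ in $L^2(\Omega)$, not a uniform approximation over $t\in[0,T]$ and over $\widehat{\mathbf{B}}\in K$. The lemma asks for a deterministic bound on a compact set of rough paths, and an $L^2$ error on the probability space does not convert into a sup over $K$ without further argument (e.g.\ a continuity estimate for the It\^o map in rough-path topology, which is exactly the delicate point). The paper sidesteps this entirely by declaring continuity of $\mu$ as a hypothesis in disguise, so your attempt is more honest, but as written the $L^2$-to-uniform passage is not justified. Your later steps (stopping, compactness of $K^*$, Chen plus shuffle to absorb the $(T-t)$-polynomials) are sound in outline, and your self-identified obstacle about uniformity in $t$ is handled correctly by that device; the real difficulty lies earlier, in producing the continuous functional $\Psi$ in the first place.
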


\begin{proof}
    We can view $\mu_t$ as $\mu(\widehat{\mathbf{B}})$, a continuous map on $\mathcal{W}\Omega^p_T(\RB^d)$. By Theorem \ref{thm:uat} we have 

    \begin{align*}
        \sup_{t \in [0,T]} \sup_{\widehat{\mathbf{B}} \in K} |\mu_t - \langle \ell, Sig(\widehat{\mathbf{B}}_{0:t}) \rangle | < \frac{\varepsilon}{2}.
    \end{align*}
Furthermore, we have $$|\langle \ell, Sig(\widehat{\mathbf{B}}_{0:t})-Sig^D(\widehat{\mathbf{B}}_{0:t}) \rangle | \leq \lVert \ell \rVert \lVert Sig(\widehat{\mathbf{B}}_{0:t})-Sig^D(\widehat{\mathbf{B}}_{0:t}) \rVert,$$ where we have, by the definition of the truncated signature $$\lVert Sig(\widehat{\mathbf{B}}_{0:t})-Sig^D(\widehat{\mathbf{B}}_{0:t}) \rVert = \sum_{i \geq D+1} \lVert \widehat{\mathbb{B}}^i_{0:t} \rVert.$$
By the compactness of $K$ and since $\widehat{\mathbf{B}} \in \mathcal{W}\Omega(\RB^2)$ with property in Equation \eqref{eq:control}, we have that the above sum admits a convergent uniform norm and goes to $0$ as $D \to \infty$. Then there exists $D \in \mathbb{N}$ such that 

    \begin{align*}
        \sup_{t \in [0,T]} \sup_{\widehat{\mathbf{B}} \in K} |\mu_t - \langle \ell, Sig^D(\widehat{\mathbf{B}}_{0:t}) \rangle | < \frac{\varepsilon}{2} + \sup_{t \in [0,T]} \sup_{\widehat{\mathbf{B}} \in K} \lVert Sig(\widehat{\mathbf{B}}_{0:t})-Sig^D(\widehat{\mathbf{B}}_{0:t}) \rVert < \varepsilon.
    \end{align*}    
\end{proof}
The coefficient $\ell$ that ensures the convergence in the lemma above can be obtained through the least square estimation by observing that $\mu_t$ is the unique $\FC_t$-measurable random variable that minimizes $\E \left[(\xi - \mu_t)^2 \right]$.

By the above lemma, we have 
\begin{align*}
   \E \left[(\xi - \mu_t)^2 \right] &=   \E \left[(\xi - \langle \ell, Sig^D(\widehat{\mathbf{B}}_{0:t}) \rangle + \langle \ell, Sig^D(\widehat{\mathbf{B}}_{0:t}) \rangle - \mu_t)^2 \right] \\
   &= \E \left[(\xi - \langle \ell, Sig^D(\widehat{\mathbf{B}}_{0:t}) \rangle)^2 \right] \\ 
   &+ \E \left[(\xi - \langle \ell, Sig^D(\widehat{\mathbf{B}}_{0:t}) \rangle)(\langle \ell, Sig^D(\widehat{\mathbf{B}}_{0:t}) \rangle - \mu_t) \right] \\
   &+ \E \left[(\langle \ell, Sig^D(\widehat{\mathbf{B}}_{0:t}) \rangle - \mu_t)^2 \right] \\
   &=  \E \left[(\xi -\langle \ell, Sig^D(\widehat{\mathbf{B}}_{0:t}) \rangle)^2 \right]+\varepsilon.
\end{align*}
By the law of large numbers, the expectation above is efficiently approximated by the Monte Carlo method using a sample of $M$ trajectories.

The reasoning above, together with Theorems \ref{thm:fem-conv}, \ref{thm:bem-conv} and Lemma \ref{lemma} yield the following convergence theorem. 

\begin{theorem}
    \label{thm:conv-main}
    Let $\widehat{Y}^{D,N,M}$ and $\widehat{Z}^{D,N,M}$ denote approximations of $Y^N$ and $Z^N$, respectively, using approach described in Algorithm \ref{algo:cex} with sample size $M$ and signature depth $D$. Then there exist $D,N,M \in \mathbb{N}$ such that 
    \begin{align*}
        \max_{0 \leq k \leq N-1} \E \left[ \sup_{t_k \leq t \leq t_{k+1}} |Y_t-Y^{D,N,M}_k|^2 \right] + \Delta_t \sum_{k=0}^{N-1} \E 
\left[ \int_{t_k}^{t_{k+1}} |Z_t - Z_{t_k}^{D,N,M} |^2 dt  \right] < \varepsilon.
    \end{align*}
\end{theorem}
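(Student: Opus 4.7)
The plan is to bound the total error by a triangle inequality that splits it into three sources: (i) the discretization error of the forward/backward Euler--Maruyama scheme, (ii) the truncation error when replacing each exact conditional expectation in \eqref{eq:bem} by a linear functional of a truncated signature of depth $D$, and (iii) the Monte--Carlo / least--squares sampling error when that linear functional is estimated from $M$ paths. Formally, I would write
\begin{align*}
\|Y_t - \widehat{Y}_k^{D,N,M}\|_2 \;\le\; \|Y_t - Y_k^N\|_2 + \|Y_k^N - \widetilde{Y}_k^{D,N}\|_2 + \|\widetilde{Y}_k^{D,N} - \widehat{Y}_k^{D,N,M}\|_2,
\end{align*}
where $\widetilde{Y}^{D,N}$ denotes the scheme in which the exact conditional expectations in \eqref{eq:bem} are replaced by the \emph{best} linear signature functional of depth $D$ (and similarly for $Z$). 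The first term is handled directly by Theorems \ref{thm:fem-conv} and \ref{thm:bem-conv}: choose $N$ large so that $C(1+|x|^2)\Delta_t < \varepsilon/3$.

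For the second term I would argue inductively from $k=N$ down to $k=0$. At step $k+1$ to step $k$, the exact scheme applies two conditional expectations (for $Z_k^N$ and for $Y_k^N$); by Lemma \ref{lemma}, each can be approximated uniformly over the compact support of $\widehat{\mathbf{B}}_{0:t_k}$ (obtained, if needed, by truncating to a set of probability $1-\delta$ and using $L^2$--boundedness to control the tail) within any prescribed tolerance by choosing $D$ sufficiently large. Since $f$ is uniformly Lipschitz in $(x,y,z)$ by Assumption \ref{assumption}, the per--step error $\eta_k$ in $(Y_k,Z_k)$ satisfies a recursion of the form $\eta_k \le (1+C\Delta_t)\eta_{k+1} + \rho_D$, where $\rho_D$ is the signature truncation error. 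Iterating gives $\eta_0 \le e^{CT}\, N \rho_D$, so picking $D$ large enough makes this contribution $<\varepsilon/3$. The $Z$ error is controlled analogously, noting the $1/\Delta_t$ factor in $Z_k^N = \Delta_t^{-1}\E_k[Y_{k+1}^N \Delta B_{k+1}]$, which forces $\rho_D$ to be chosen of smaller order than $\Delta_t$ — this is where one has to be careful with the order of quantifiers ($N$ first, then $D$ depending on $N$).

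For the third term, the optimal coefficients $\ell$ are estimated by ridge regression on $M$ samples; the computation immediately after the proof of Lemma \ref{lemma} in the paper shows that the least--squares objective differs from the true $L^2$ projection by only the signature truncation bias plus a Monte--Carlo fluctuation. The latter is of order $M^{-1/2}$ by the law of large numbers (or a concentration inequality for quadratic forms in Gaussian variables, since the signature of $\widehat{B}$ has moments of all orders). So choosing $M$ large enough makes the sampling error $<\varepsilon/3$, again with the usual care that the Lipschitz propagation accumulates an $N$--factor in the backward iteration.

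The hard part — and the step I expect to require the most care — is the \emph{joint} choice of $(D,N,M)$: the signature truncation bound depends on the range of $\widehat{\mathbf{B}}_{0:t}$ (which is only almost--surely bounded, not uniformly so), and the backward error amplification through the Lipschitz factor of $f$ combined with the $1/\Delta_t$ weighting of the $Z$--regression can blow up if the quantifiers are taken in the wrong order. The clean way to present it is: fix $\varepsilon>0$; pick $N$ via Theorems \ref{thm:fem-conv}--\ref{thm:bem-conv} to handle (i); then, with $N$ fixed, pick $D$ via Lemma \ref{lemma} to dominate the Lipschitz--amplified signature bias (ii); finally, with $(N,D)$ fixed, pick $M$ so that Monte--Carlo error in the ridge regression at each of the $N$ time steps is controlled uniformly, yielding (iii).
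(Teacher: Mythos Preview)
Your proposal is correct and follows essentially the same approach as the paper: decompose the error into the Euler--Maruyama discretization part (handled by Theorems~\ref{thm:fem-conv} and~\ref{thm:bem-conv}), the signature truncation part (handled by Lemma~\ref{lemma}), and the Monte--Carlo/least--squares part (handled by the $L^2$--projection computation and the law of large numbers immediately preceding the statement). In fact, the paper's own argument is simply the one-line remark that ``the reasoning above, together with Theorems~\ref{thm:fem-conv}, \ref{thm:bem-conv} and Lemma~\ref{lemma} yield'' the theorem; your proposal spells out the triangle-inequality decomposition, the Lipschitz backward propagation of the per-step error, and the correct order of quantifiers ($N$ first, then $D$, then $M$) in considerably more detail than the paper itself provides.
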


\section{Deep learning for the ambiguous interest rates}
Here, we try to solve numerically the example of a dynamic risk measure dealing with ambiguity on interest rates described in Equation \eqref{eq: dynamic-csa-2}. Let us recall that we want to find the dynamic risk measure given by 
\begin{equation*}
 \rho_t(X)= \esssup_{0 \leq r_t \leq \beta_t \leq R_t} \E \left[ \left. e^{-\int_t^T \beta_s ds} (-X)\right| \mathcal{F}_t \right], 
\end{equation*}
induced by a BSDE.

Instead of tackling the "control" problem in \eqref{eq: dynamic-csa-2} directly, we would rather solve the corresponding BSDE with the driver 
\begin{equation}
    \label{eq:g}
    f(t,y)=\sup_{0 \leq r_t \leq \beta_t \leq R_t} (-\beta_t y).
\end{equation}

It is easy to check that the optimal $\beta$ is given as $\widehat{\beta} = R \mathds{1}(y < 0) + r \mathds{1}(y \geq 0)$. However, explicitly computing the dynamic risk measure under the optimal choice $\widehat{\beta}$ is not easy. To do so numerically, we choose the following deep learning approach to model the driver $g$ in Equation \eqref{eq:g}.

First, we use a feedforward neural network $\varphi:\RB^3 \to \RB$ with inputs $y,r, R$ to predict $\beta$ that maximizes $(-\beta y)$ where $r \leq \beta \leq R$. The neural network $\varphi$ has 3 hidden layers and 11 hidden neurons at each layer. Between the hidden layers, a ReLu activation function is used. The architecture of this neural network can be seen in Figure \ref{fig:nn}. For the loss function, the outputs are aggregated by taking the sample mean. During the training phase, the $y$ input is generated by a standard normal distribution, while the inputs $r$ and $R$ are chosen uniformly while satisfying $r \leq R$. This is done so that the approach applies also to the problems where $\widehat{\beta}$ of different forms is possible. Then we use the Algorithm \ref{algo:SIG-BSDE} where the driver is substituted by the neural network $\varphi$. We summarize this description using the following algorithm.

\begin{figure}[!htb]
    \centering
    \includegraphics[width=0.5\linewidth]{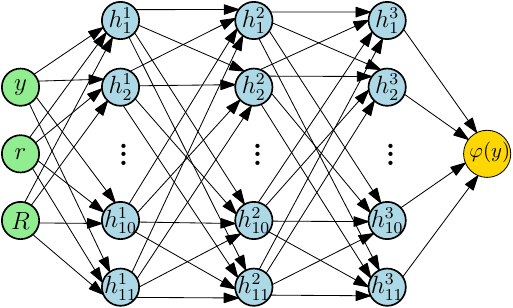}
    \caption{Architecture of the neural network $\varphi$}
    \label{fig:nn}
\end{figure}

\begin{algorithm}[H]
\caption{\textbf{AIR} Algorithm}
\label{algo:AIR-SIG}
\begin{algorithmic}

\Require Terminal values $\{X^j \}^{1 \leq j \leq M}$, Brownian motion samples $\{B^i_{n}\}^{1 \leq i \leq M}_{0 \leq n \leq N}$, a neural network $\varphi$, batch size $P$.
\For{$i$ in number of epochs}
    \State Initialize $\{r^p \}^{1 \leq p \leq P}, \{R^p \}^{1 \leq p \leq P}, \{y^p \}^{1 \leq p \leq P}$ from a training distribution. 
    \State Compute $\beta = \max\{r, \min\{R, \varphi(y,r,R)  \} \}$
    \State $loss = \E [ \beta y ]$
    \State Compute gradient of $loss$ and modify weights of $\varphi$ using backpropagation. 
\EndFor

\State Numerically solve the BSDE $d\rho_t = \varphi(\rho_t) \rho_t dt + Z_t dB_t$ using \textbf{SIG-BSDE} algorithm

\end{algorithmic}
\end{algorithm}

\section{Numerical results}

Here, we present numerical solutions for several examples of BSDEs using the above mentioned approaches. First, we test our algorithm on a linear BSDE with an explicit solution. Furthermore, we investigate the CIR example in Equation \eqref{eq:cir} and the entropic risk measure considered in Example \ref{ex:entrop}. Results are compared with the ones obtained by following the deep learning backward dynamic programming (DBDP) approach proposed in \cite{pham}. At last, we provide a numerical solution of the ambiguous interest rate example outlined in Equation \eqref{eq: dynamic-csa-2}. Apart from providing graphs of predicted and theoretical dynamics, we use estimated relative $L^2$ error given by 

\begin{align*}
    ERL^2(\hat{a},a) = \sqrt{\frac{1}{M}\sum_{j=1}^M \sum_{k=0}^N |\hat{a}_k^j - a_k^j|^2\Delta_t},
\end{align*}
for discrete paths $a$ and $\hat{a}$.

For the \textbf{SIG-BSDE} approach, unless stated otherwise, we use sample size $M=2^{13}$, number of discretization points $N=500$ and terminal time $T=1$. We truncate the signature at depth $D=3$, which means that the regression estimation described in \ref{algo:cex} works on 14 data points. The ridge regression is performed with the regularization parameter $\lambda=0.3$. Both depth and regularization parameter do not seem too important as long as the depth is greater than 1 to model nonlinearity and $\lambda$ is not too big. 

For the DBDP approach that we use as a comparison to our model, we use a different set of parameters due to computational restrictions. This is due to the use of deep learning methods instead of regression, which are much more time-consuming. We use $N=100$ timesteps and a batch size of $2^{10}$. A feed-forward neural network consists of 3 hidden layers with 11 neurons each and $tanh$ activation functions. Let us now look at particular examples. 

\subsection{Linear BSDE}
Here below we deal with the example of a linear BSDE in Equation \eqref{eq2.18}, where we consider as terminal condition $X = \exp(\beta B_T - \frac{\beta^2}{2}T)$. Using Theorem \ref{thm:linear} and the martingale property of $X$ we deduce that the explicit solution is given by
\begin{align*}
    Y_t = e^{\beta B_t - \frac{\beta^2}{2}t} + e^{\beta^2T} e^{2\beta B_t - 2 \beta^2 t} - e^{2\beta B_t - \beta^2 t}.
\end{align*}

In Figure \ref{fig:lin}, we compare two numerical solutions, one done with \textbf{SIG-BSDE} algorithm and the other with the DBDP approach. We also state the estimated $L^2$ errors.

 \begin{figure}[!htb]
\centering
\begin{subfigure}{.5\textwidth}
  \centering
  \includegraphics[width=1\linewidth]{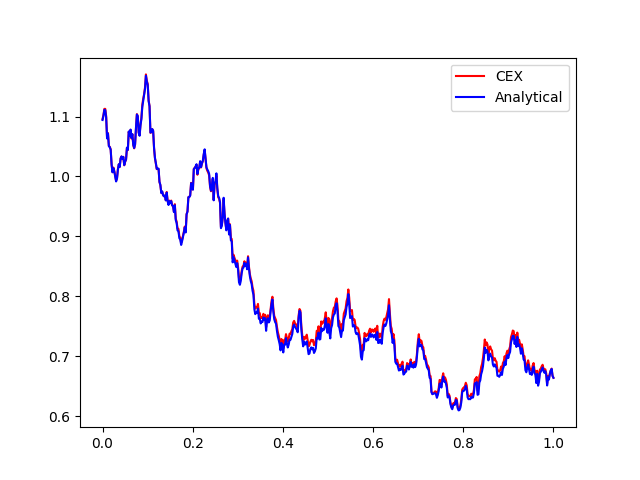}
  \caption{\textbf{SIG-BSDE}, $ERL^2 = 0.013$}
  \label{fig:lin-sig}
\end{subfigure}%
\begin{subfigure}{.5\textwidth}
  \centering
  \includegraphics[width=1\linewidth]{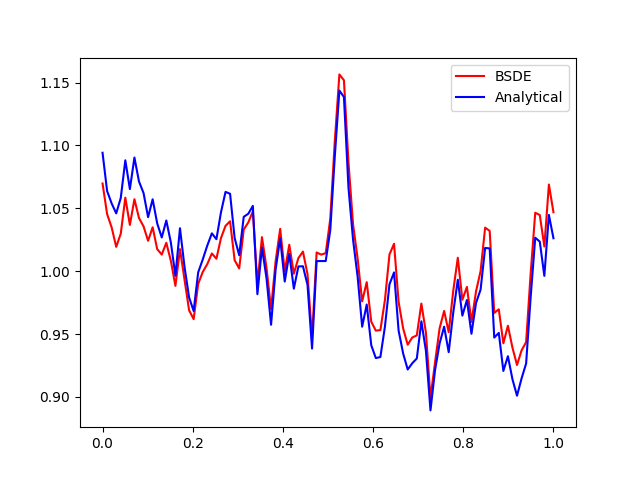}
  \caption{DBDP, $ERL^2 = 0.033$}
  \label{fig:lin-deep}
\end{subfigure}
    \caption{Linear BSDE: Comparison of a realization of one numerical solution obtained by \textbf{SIG-BSDE} and one by DBDP, with their respective estimated $L^2$ erorrs}
    \label{fig:lin}
\end{figure}

We can see that \textbf{SIG-BSDE} outperforms the DBDP approach. Furthermore, it provides more flexibility, where one can consider more finely refined time intervals due to smaller computational complexity. In Figure \ref{fig:lin-hist},  we can see a distribution of estimated $L^2$ errors over 50 independent iterations of the algorithm. 

\begin{figure}[!htb]
    \centering
    \includegraphics[width=0.5\linewidth]{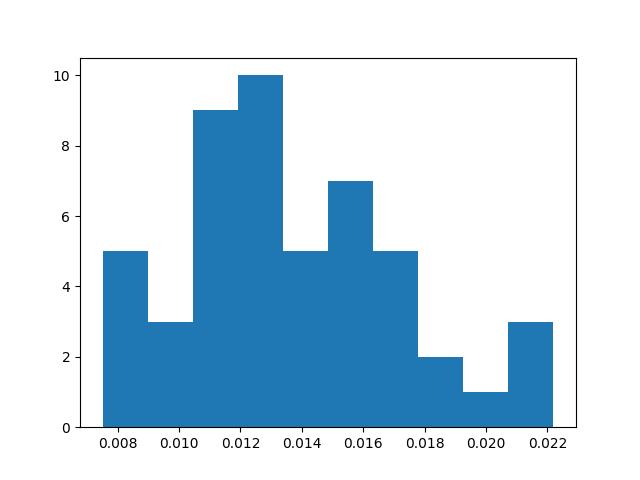}
    \caption{Linear BSDE: Histogram of estimated $L^2$ errors over 50 iterations. The average error is 0.013 with a standard deviation 0.004}
    \label{fig:lin-hist}
\end{figure}

At last, let us comment on the behaviour of the \textbf{SIG-BSDE} algorithm with respect to the sample size. In Figure \ref{fig:lin-m}, we can see that the error reduces with a rate proportional to $\frac{1}{\sqrt{M}}$, which is in accordance with the theory of regression models. For each sample size, 50 independent iterations were made, whose mean value is then presented as the final error. The same was found for all examples; hence, we do not include other graphs. 

\begin{figure}[!htb]
    \centering
    \includegraphics[width=0.5\linewidth]{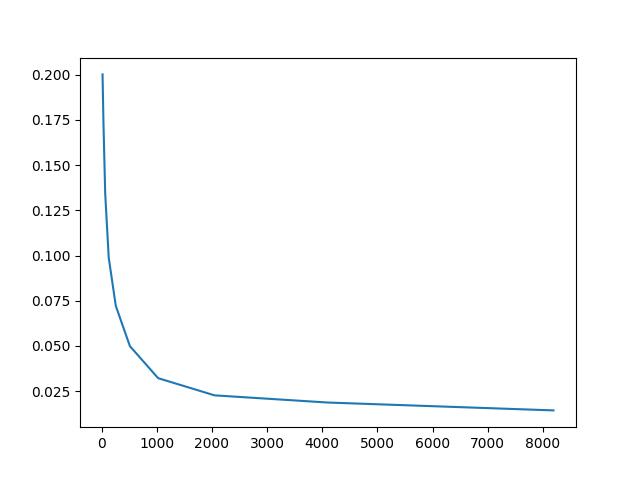}
    \caption{Linear BSDE: Estimated $L^2$ error with respect to different sample sizes $M$}
    \label{fig:lin-m}
\end{figure}

\subsection{Entropic risk measure}

We consider an example of the dynamic entropic risk measure presented in Section \ref{sec:entropic}. For simulations, we set parameter $\theta = 0.3$. We find the numerical representation by solving the corresponding BSDE.  
In Figure \ref{fig:ent}, we again compare the numerical solution given by \textbf{SIG-BSDE} algorithm to the one obtained via the DBDP approach, with the corresponding estimated $L^2$ errors.

 \begin{figure}[!htb]
\centering
\begin{subfigure}{.5\textwidth}
  \centering
  \includegraphics[width=1\linewidth]{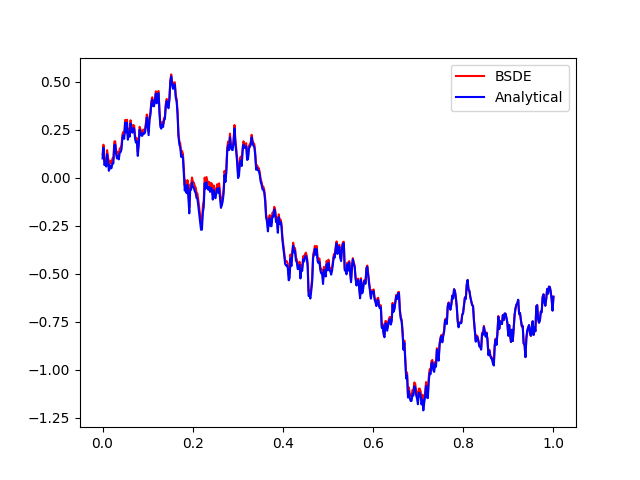}
  \caption{\textbf{SIG-BSDE}, $ERL^2 = 0.073$}
  \label{fig:ent-sig}
\end{subfigure}%
\begin{subfigure}{.5\textwidth}
  \centering
  \includegraphics[width=1\linewidth]{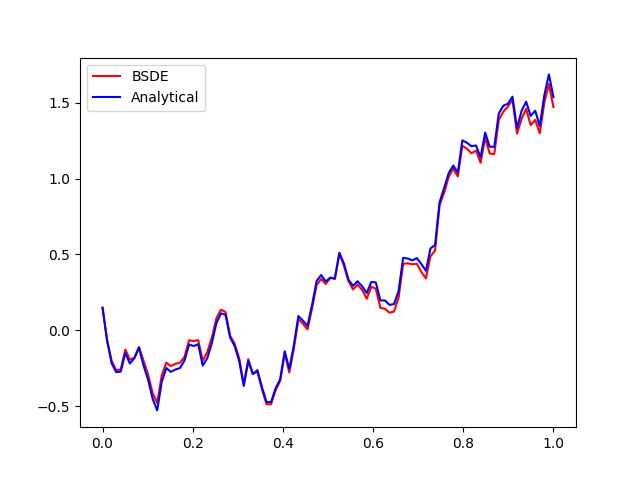}
  \caption{DBDP, $ERL^2 = 0.077$}
  \label{fig:ent-deep}
\end{subfigure}
    \caption{Entropic risk measure: Comparison of a realization of one numerical solution obtained by \textbf{SIG-BSDE} and one by DBDP, with their respective estimated $L^2$ errors}
    \label{fig:ent}
\end{figure}

Here, we can see that \textbf{SIG-BSDE} and the DBDP approach give comparable results. The distribution of estimated $L^2$ errors over 50 independent algorithm iterations can be found in Figure \ref{fig:ent-hist}. 

\begin{figure}[!htb]
    \centering
    \includegraphics[width=0.5\linewidth]{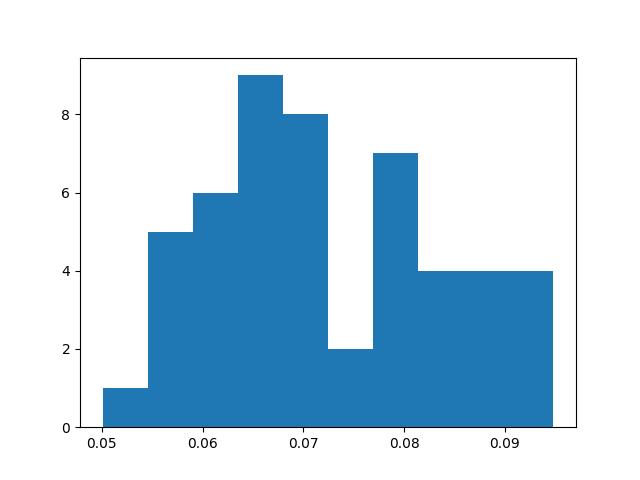}
    \caption{Entropic risk measure: Histogram of estimated $L^2$ errors over 50 iterations. The average error is 0.073 with a standard deviation 0.011}
    \label{fig:ent-hist}
\end{figure}

\subsection{The ambiguous interest rates}
Here, we try to find a numerical solution to the dynamic risk measure presented in \eqref{eq: dynamic-csa-2}. For the simplicity of the presentation, let us take $X = B_T, r_t=0$ and $R_t = 1$, where $B$ is a Brownian motion. For the neural network $\varphi$, we have two hidden layers with 11 neurons each. The convergence of the loss function and predicted output can be seen in Figures \ref{fig:losses} and \ref{fig:scatter}.

 \begin{figure}[!htb]
\centering
\begin{subfigure}{.5\textwidth}
  \centering
  \includegraphics[width=1\linewidth]{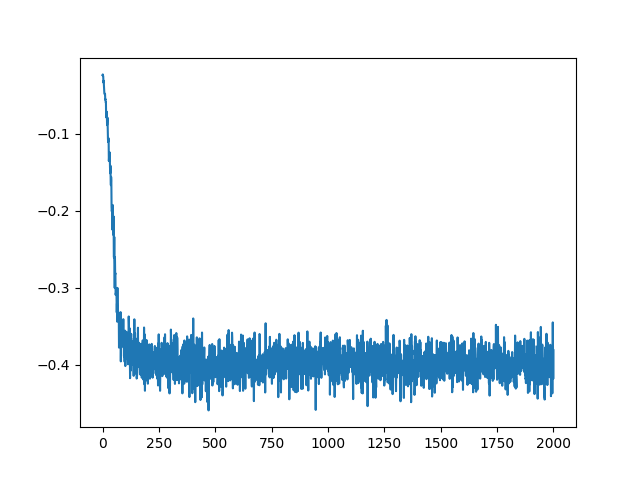}
  \caption{Convergence of the loss function}
  \label{fig:losses}
\end{subfigure}%
\begin{subfigure}{.5\textwidth}
  \centering
  \includegraphics[width=1\linewidth]{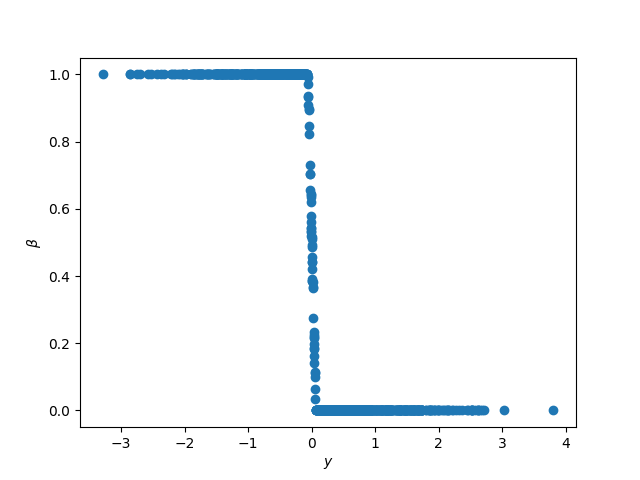}
  \caption{Predicted $\beta$ values given $y$}
  \label{fig:scatter}
\end{subfigure}
    \label{fig:beta}
    \caption{Deep learning estimation of the driver $f(t,y)$}
\end{figure}

Let us denote 
\begin{equation*}
\rho_t(X,\beta)=\E \left[ \left. e^{-\int_t^T \beta_s ds} (-X)\right| \mathcal{F}_t \right] 
\end{equation*}
for given $X$ and $\beta$. By construction $\rho_t(X) \geq \rho_t(X,\beta)$ for any choice of $\beta$.

Under deterministic and constant rate $\beta$ the dynamical risk measure becomes $\rho_t(X,\beta) = -e^{-\beta(T-t)}B_t$. In Figure \ref{fig:beta-compare} we plot a realization of $\rho_t(X)$ and $\rho_t(X,\beta)$ for choices $\beta \equiv 0,\frac{1}{2},1$.

\begin{figure}[!htb]
    \centering
    \includegraphics[width=0.5\linewidth]{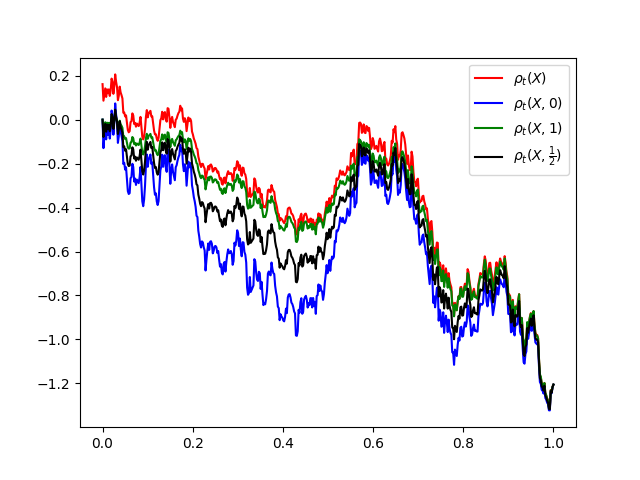}
    \caption{Comparison of $\rho_t(X)$ with $\rho_t(X,\beta)$ for $\beta \equiv 0,\frac{1}{2},1$}
    \label{fig:beta-compare}
\end{figure}

We can see that the simulated $\rho_t(X)$ is, in fact, always greater or equal to the $\rho_t(X,\beta)$ for the sub optimal choice of $\beta$. This is especially evident for times closer to 0, where the sub optimality of the choice of $\beta$ has time to manifest itself.

\subsubsection{CIR model}
We consider a CIR-modeled interest rate example from Equation \eqref{eq:cir}, with a choice of parameters $a = b = \sigma = 1$. In Figure \ref{fig:cir}, we see a comparison of the numerical solution given by \textbf{SIG-BSDE} algorithm to the one obtained via the DBDP approach. We also state the estimated $L^2$ errors.

 \begin{figure}[!htb]
\centering
\begin{subfigure}{.5\textwidth}
  \centering
  \includegraphics[width=1\linewidth]{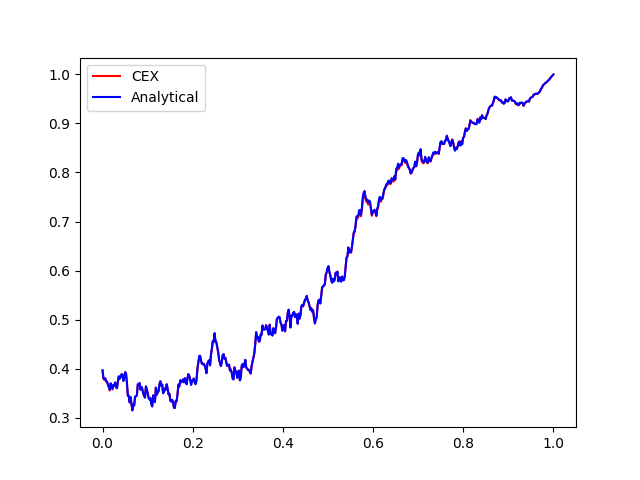}
  \caption{\textbf{SIG-BSDE}, $ERL^2 = 0.005$}
  \label{fig:cir-sig}
\end{subfigure}%
\begin{subfigure}{.5\textwidth}
  \centering
  \includegraphics[width=1\linewidth]{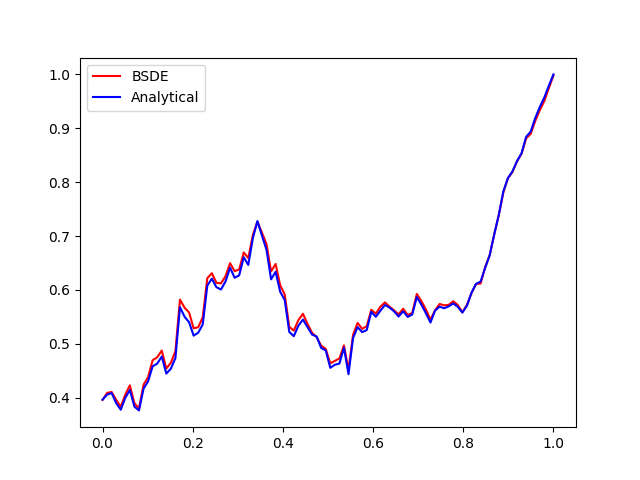}
  \caption{DBDP, $ERL^2 = 0.011$}
  \label{fig:cir-deep}
\end{subfigure}
    \caption{CIR risk measure: Comparison of a realization of one numerical solution obtained by \textbf{SIG-BSDE} and one by DBDP, with their respective estimated $L^2$ errors}
    \label{fig:cir}
\end{figure}

In this example as well, the \textbf{SIG-BSDE} outperforms the DBDP approach. In Figure \ref{fig:cir-hist}, we present a distribution of estimated $L^2$ errors over 50 independent algorithm iterations. 

\begin{figure}[!htb]
    \centering
    \includegraphics[width=0.5\linewidth]{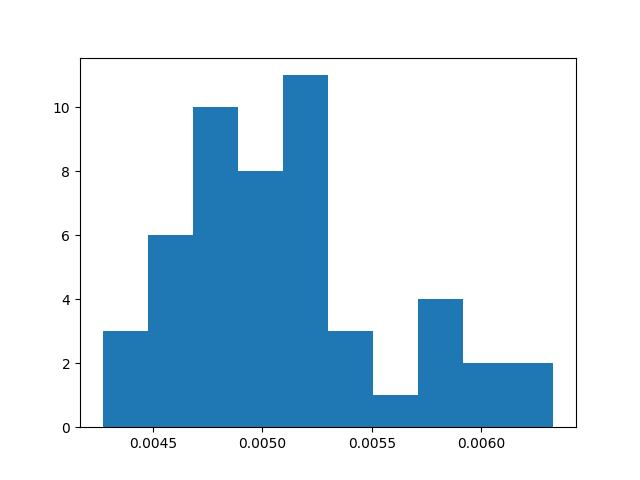}
    \caption{CIR risk measure: Histogram of estimated $L^2$ errors over 50 iterations. The average error is 0.005 with a standard deviation \num{4.8e-4}}
    \label{fig:cir-hist}
\end{figure}


\begin{thebibliography}{9999}                                          
\bibitem{entropic-bsde} Barrieu, P., \& El Karoui, N. (2008). Pricing, Hedging and Optimally Designing Derivatives via Minimization of Risk Measures. in the book" Indifference Pricing: Theory and Applications" edited by Ren\'e Carmona.

\bibitem {b}J. M. Bismut. An introduction approach to duality in optimal
stochastic control. SIAM Rev. 20, 62-7, 1978.

\bibitem{sig-unique}Boedihardjo, H., Geng, X., Lyons, T., \& Yang, D. (2016). The signature of a rough path: uniqueness. Advances in Mathematics, 293, 720-737.

\bibitem{bem-conv}Bouchard, B., \& Touzi, N. (2004). Discrete-time approximation and Monte-Carlo simulation of backward stochastic differential equations. Stochastic Processes and their applications, 111(2), 175-206.

\bibitem{bem}Chevance, D. (1997). Numerical methods for backward stochastic differential equations. Numerical methods in finance, 232.

\bibitem{rm-rl}Coache, A., \& Jaimungal, S. (2024). Reinforcement learning with dynamic convex risk measures. Mathematical Finance, 34(2), 557-587.

\bibitem{cox}Cox, J. C., Ingersoll Jr, J. E., \& Ross, S. A. (2005). A theory of the term structure of interest rates. In Theory of valuation (pp. 129-164).

\bibitem{delb} Delbaen, F. (2006). The structure of m--stable sets and in particular of the set of risk neutral measures. In In memoriam Paul-Andr\'{e} Meyer: S\'{e}minaire de Probabilit\'{e}s XXXIX (pp. 215-258). Berlin, Heidelberg: Springer Berlin Heidelberg.

\bibitem{delbaen-et-al} Delbaen, F., Peng, S., \& Rosazza Gianin, E. (2010). Representation of the penalty term of dynamic concave utilities. Finance and Stochastics, 14, 449-472.

\bibitem{detlef-scandolo} Detlefsen, K., \& Scandolo, G. (2005). Conditional and dynamic convex risk measures. Finance and stochastics, 9, 539-561.

\bibitem{di-nunno-rg} 
Di Nunno, G., \& Rosazza Gianin, E. (2024). Fully-dynamic risk measures: horizon risk, time-consistency, and relations with BSDEs and BSVIEs. Accepted for publication on SIAM Journal on Financial Mathematics. 

\bibitem{EPQ}El Karoui, N., Peng, S., \& Quenez, M. C. (1997). Backward stochastic differential equations in finance. Mathematical finance, 7(1), 1-71.


\bibitem{ELK-ravanelli} El Karoui, N., \& Ravanelli, C. (2009). Cash subadditive risk measures and interest rate ambiguity. Mathematical Finance: An International Journal of Mathematics, Statistics and Financial Economics, 19(4), 561-590.

\bibitem{fouque} Feng, Y., Min, M., \& Fouque, J. P. (2022). Deep learning for systemic risk measures. In Proceedings of the Third ACM International Conference on AI in Finance (pp. 62-69).

\bibitem{FS02} F\"{o}llmer, H., \& Schied, A. (2002). Convex measures of risk and trading constraints. Finance and stochastics, 6, 429-447.

\bibitem{FS} F\"{o}llmer, H., \& Schied, A. (2011). Stochastic finance: an introduction in discrete time. Walter de Gruyter.

\bibitem{FRG} Frittelli, M., \& Rosazza Gianin, E. (2002). Putting order in risk measures. Journal of Banking \& Finance, 26(7), 1473-1486.

\bibitem{friz-book}Friz, P. K., \& Victoir, N. B. (2010). Multidimensional stochastic processes as rough paths: theory and applications (Vol. 120). Cambridge University Press.

\bibitem{bem-reg}Gobet, E., Lemor, J. P., \& Warin, X. (2005). A regression-based Monte Carlo method to solve backward stochastic differential equations.

\bibitem{han}Han, J., Jentzen, A., \& E, W. (2018). Solving high-dimensional partial differential equations using deep learning. Proceedings of the National Academy of Sciences, 115(34), 8505-8510.

\bibitem{pham}Hur\'e, C., Pham, H., \& Warin, X. (2020). Deep backward schemes for high-dimensional nonlinear PDEs. Mathematics of Computation, 89(324), 1547-1579.

\bibitem{uat}Kidger, P., Bonnier, P., Perez Arribas, I., Salvi, C., \& Lyons, T. (2019). Deep signature transforms. Advances in Neural Information Processing Systems, 32.

\bibitem{kurtz}Kurtz, T. G., Pardoux, \'E., \& Protter, P. (1995). Stratonovich stochastic differential equations driven by general semimartingales. In Annales de l'IHP Probabilit\'es et statistiques (Vol. 31, No. 2, pp. 351-377).

\bibitem{sig-book}Lyons, T. J., Caruana, M., \& L\'{e}vy, T. (2007). Differential equations driven by rough paths (pp. 81-93). Springer Berlin Heidelberg.


\bibitem {P}E. Pardoux. BSDEs, weak convergence and homogenization of
semilinear PDEs, Nonlinear analysis, differential equations and control
(Montreal, QC, 1998), Kluwer Acad. Publ., Dordrecht, pp. 503--549, 1999.

\bibitem {PP}E. Pardoux and S. Peng, Adapted solution of a backward stochastic
differential equation, Systems Control Lett. 14, no. 1, 55--61, 1990.


\bibitem{revuz-yor} D. Revuz and M. Yor, Continuous martingales and Brownian motion (Vol. 293). Springer Science \& Business Media, 2013.


\bibitem{RG}  Rosazza Gianin, E. (2006). Risk measures via g-expectations. Insurance: Mathematics and Economics, 39(1), 19-34.

\bibitem{bem-conv2}Zhang, J. (2004). A numerical scheme for BSDEs. The annals of applied probability, 14(1), 459-488.

\bibitem{bsde-book}Zhang, J. (2017). Backward stochastic differential equations (pp. 79-99). Springer New York.

\end{thebibliography}
\end{document}